%% filename: amsart-template.tex
%% version: 1.1
%% date: 2014/07/24
%%
%% American Mathematical Society
%% Technical Support
%% Publications Technical Group
%% 201 Charles Street
%% Providence, RI 02904
%% USA
%% tel: (401) 455-4080
%%      (800) 321-4267 (USA and Canada only)
%% fax: (401) 331-3842
%% email: tech-support@ams.org
%% 
%% Copyright 2008-2010, 2014 American Mathematical Society.
%% 
%% This work may be distributed and/or modified under the
%% conditions of the LaTeX Project Public License, either version 1.3c
%% of this license or (at your option) any later version.
%% The latest version of this license is in
%%   http://www.latex-project.org/lppl.txt
%% and version 1.3c or later is part of all distributions of LaTeX
%% version 2005/12/01 or later.
%% 
%% This work has the LPPL maintenance status `maintained'.
%% 
%% The Current Maintainer of this work is the American Mathematical
%% Society.
%%
%% ====================================================================

%     AMS-LaTeX v.2 template for use with amsart
%
%     Remove any commented or uncommented macros you do not use.

\documentclass{amsart}
\usepackage{enumerate}
\newtheorem{theorem}{Theorem}[section]

\usepackage{epsfig}
\usepackage{epstopdf}
\newtheorem{definition}[theorem]{Definition}

\newtheorem{proposition}{Proposition}[section]
\newtheorem{assumption}{Assumption}[section]

\theoremstyle{remark}
\newtheorem{corollary}{Corollary}[section]
\newtheorem{remark}[theorem]{Remark}
\usepackage{graphicx}
\numberwithin{equation}{section}

\begin{document}

\title[Inverse boundary value problem for time-harmonic elastic
  waves]{Uniqueness and Lipschitz stability of an inverse
  boundary value problem for time-harmonic elastic waves}

\author{E. Beretta}
\address{Dipartimento di Matematica ``F. Brioschi'', Politecnico di Milano}
\curraddr{}
\email{elena.beretta@polimi.it}
\thanks{}

%    author two information
\author{M.V. de Hoop}
\address{Department of Mathematics, Purdue University}
\curraddr{}
\email{mdehoop@purdue.edu}
\thanks{}

\author{E. Francini}
\address{Dipartimento di Matematica e Informatica ``U. Dini'', Universit\`{a} di Firenze}
\curraddr{}
\email{elisa.francini@unifi.it}
\thanks{}

\author{S. Vessella}
\address{Dipartimento di Matematica e Informatica ``U. Dini'', Universit\`{a} di Firenze}
\curraddr{}
\email{sergio.vessella@unifi.it}
\thanks{}

\author{J. Zhai}
\address{Department of Mathematics, Purdue University}
\curraddr{}
\email{zhai12@purdue.edu}
\thanks{}

\subjclass[2010]{Primary 35M30, 35J08}

\keywords{}

\date{}

\dedicatory{}

\begin{abstract}
We consider the inverse problem of determining the Lam\'{e} parameters
and the density of a three-dimensional elastic body from the local
time-harmonic Dirichlet-to-Neumann map. We prove uniqueness and
Lipschitz stability of this inverse problem when the Lam\'{e}
parameters and the density are assumed to be piecewise constant on a
given domain partition.
\end{abstract}

\maketitle

\section{Introduction}

We study the inverse boundary value problem for time-harmonic elastic
waves. We consider isotropic elasticity, and allow partial boundary
data. The Lam\'{e} parameters and the density are assumed to be
piecewise constants on a given partitioning of the domain. The system
of equations describing time-harmonic elastic waves is given by,
\begin{equation}\label{elasticity system}
\begin{cases}
\operatorname{div}(\mathbb{C}\hat{\nabla} u)+\rho\omega^2 u=0&\text{in}~\Omega\subset\mathbb{R}^3 ,
\\
u=\psi&\text{on}~~\partial\Omega ,
\end{cases}
\end{equation}
where $\Omega$ is an open and bounded domain with smooth boundary,
$\hat{\nabla}u$ denotes the strain tensor,
$\hat{\nabla}u:=\frac{1}{2}(\nabla u+(\nabla u)^T)$, $\psi\in
H^{1/2}(\partial\Omega)$ is the boundary displacement or source,
and $\mathbb{C}\in L^\infty(\Omega)$ denotes the isotropic elasticity
tensor with Lam\'{e} parameters $\lambda,\mu$:
\[\mathbb{C}=\lambda I_3\otimes I_3+2\mu\mathbb{I}_{sym},~\text{a.e. in}~\Omega,\]
where $I_3$ is $3\times 3$ identity matrix and $\mathbb{I}_{sym}$ is the fourth order tensor such that $\mathbb{I}_{sym}A=\hat{A}$, $\rho\in L^\infty(\Omega)$ is the density, and $\omega$ is the frequency. Here, we make use of the following notation for matrices and tensors: For $3\times 3$ matrices $A$ and $B$ we set $A:B=\sum_{i,j=1}^3A_{ij}B_{ij}$ and $\hat{A}=\frac{1}{2}(A+A^T)$. We assume that
\begin{eqnarray}
&0<\alpha_0\leq\mu\leq \alpha_0^{-1}, 0<\beta_0\leq 2\mu+3\lambda\leq\beta_0^{-1}~\text{a.e. in}~\Omega,\label{1.2}\\
&0\leq\rho\leq\gamma_0^{-1}.\label{1.3}
\end{eqnarray}
The Dirichlet-to-Neumann map, $\Lambda_{\mathbb{C},\rho}$, is defined
by
\[
   \Lambda_{\mathbb{C},\rho} : H^{1/2}(\partial\Omega)
      \ni \psi \rightarrow (\mathbb{C} \hat{\nabla}u)
        \nu|_{\partial\Omega} \in H^{-1/2}(\partial\Omega) ,
\]
where $\nu$ is the outward unit normal to $\partial\Omega$. We
consider the inverse problem:
\[
   \text{determine
           $\mathbb{C},\rho$ from $\Lambda_{\mathbb{C},\rho}$.}
\]
For the static case (that is, $\omega=0$) of our problem, Imanuvilov
and Yamamoto \cite{IY} proved, in dimension two, a uniqueness result
for $C^{10}$ Lam\'{e} parameters. In dimension three, Nakamura and
Uhlmann \cite{NU2} proved uniqueness assuming that the Lam\'{e}
parameters are $C^{\infty}$ and that $\mu$ is close to a positive
constant. Eskin and Ralston \cite{ER} proved a related result. Global
uniqueness of the inverse problem in dimension three assuming general
Lam\'{e} parametres remains an open problem. Beretta \textit{et al.}
proved the uniqueness when the Lam\'{e} parameters are assumed to be
piecewise constant. They proved the Lipschitz stability when
interfaces of subdomains contain flat parts \cite{BFV}; later, they
extended this result to non-flat interfaces \cite{BFMRV}. Alessandrini
\textit{et al.} \cite{AdCMR} proved a logarithmic stabilty estimate
for the inverse problem of identifying an inclusion, where constant
Lam\'{e} parameters are different from the background ones.

The key application we have in mind is (reflection) seismology, where
Lam\'{e} parameters and density need to be recovered from the
Dirichlet-to-Neumann map. In actual seismic acquisition, raw vibroseis
data are modeled by the Neumann-to-Dirichlet map, the inverse of the
Dirichlet-to-Neumann map: The boundary values are given by the normal
traction underneath the base plate of a vibroseis and are zero (`free
surface') elsewhere, while the particle displacement (in fact,
velocity) is measured by geophones located in a subset of the boundary
(Earth's surface). The applied signal is essentially time-harmonic
(suppressing the sweep); see \cite[(2.52)-(2.53)]{Bae}. (The
displacement needs to be measured also underneath the base plate.)

A key complication addressed in this paper is the multiparameter
aspect of this inverse problem. For the acoustic waves modeled by the equation
\begin{equation}\label{acoustic}\nabla\cdot(\gamma\nabla u)+q\omega^2 u=0,\end{equation}
Nachman \cite{Nach} proved the unique recovery of $\gamma\in C^2$ and $q\in L^\infty$ with Dirichlet-to-Neumann maps at two different admissible frequencies $\omega_1,\omega_2$. For the optical tomography problem, that is, recovering simultaneously $a>0$ and $c>0$ in the partial differential equation
\[-\nabla\cdot(a\nabla u)+cu=0,\]
from all possible boundary Dirichlet and Neumann pairs, Arridge and Lionheart \cite{AL98} demonstrated the non-uniqueness for general $a$ and $c$. However, when $a$ is piecewise constant and $c$ is piecewise analytic, Harrach \cite{Ha} proved the uniqueness of this inverse problem. In this paper, we prove, for our problem, that recovering a higher order coefficient and a lower order coefficient jointly, that are assumed to be piecewise constant, only needs single frequency data also. If we assume $\gamma,q$ to be piecewise constant in (\ref{acoustic}), we can establish the uniqueness with single frequency data, following the methods of proof of this paper.

With the conditional Lipschitz
stability which we obtain here, we can invoke iterative methods with
guaranteed convergence for local reconstruction, such as the nonlinear
Landweber iteration \cite{dHQS1} and the nonlinear projected
steepest descent algorithm \cite{dHQS2} (including a stopping
criterion which allows inaccurate data). In reflection seismology,
iterative methods for solving inverse problems, casting these into
optimization problems, have been collectively referred to as Full
Waveform Inversion (FWI) through the use of the adjoint state
method. These methods were introduced in this field of application by
Chavent \cite{Chavent:1983}, Lailly \cite{Lailly:1983} and Tarantola
\& Valette \cite{Tarantola:1982, Tarantola:1984} albeit for scalar
waves. An early study of stability in dimension one can be found in
Bamberger \textit{et al.} \cite{Bamberger:1979}. Mora \cite{Mora:1987}
developed the adjoint state formulation for the case of elastic waves
and carried out computational experiments; Crase \textit{et al.}
\cite{Crase:1990} then carried out applications to field
data. Advantages of using time-harmonic data, following specific
workflows, were initially pointed out by Pratt and collaborators
\cite{PrattWorthington:1990, Pratt:1996, Pratt:1999}; Bunks \textit{et
  al.} \cite{Bunks:1995} developed an important insight in the use of
strictly finite-frequency data. In recent years, there has been a
significant effort in further developing and applying these approaches
(with emphasis on iterative Gauss-Newton methods) -- in the absence of
a notion of (conditional) uniqueness, stability or convergence --
often in combination with intuitive strategies for selecting parts of
the data. In exploration seismology, we mention the work of G\'{e}lis
\textit{et al.} \cite{Gelis:2007}, Choi \cite{Choi:2008}, Brossier
\textit{et al.} \cite{Brossier:2009, Brossier:2010} and Xu \& McMechan
\cite{McMechan:2014}; in global seismology, we mention the work of
Tromp \textit{et al.} \cite{Tromp:2005} and Fichtner \& Trampert
\cite{Fichtner:2011}.

In this paper, we consider piecewise constant Lam\'{e} parameters and
density of the form
\[
   \mathbb{C}(x) = \sum_{j=1}^N (\lambda_j I_3 \otimes I_3
       + 2\mu_j\mathbb{I}_{sym})\chi_{D_j}(x) ,\quad
   \rho(x) = \sum_{j=1}^N\rho_j\chi_{D_j}(x) ,
\]
where the $D_j$'s, $j=1,\cdots,N$ are known disjoint Lipschitz domains
and $\lambda_j,\mu_j,\rho_j,j=1,\cdots,N$ are unknown constants. We
establish uniqueness and a Lipschitz stability estimate of the above
mentioned inverse boundary value problem. The method of proof follows
the ideas introduced by Alessandrini and Vessella \cite{Al2} in the
study of electrical impedance tomography (EIT) problems. The
counterpart for scalar waves, that is, the inverse boundary value
problem for the Helmholtz equation, was analyzed by Beretta \textit{et
  al.} \cite{BdHQ}.

The existence and the ``blow up'' behavior of singular solutions close
to a flat discontinuity are utilized in our proof. The quantitative
estimate of unique continuation for elliptic systems, which is derived
from a three spheres inequality, play an essential role in the
procedure. We directly prove a log-type stability estimate for the
Lam\'{e} parameters and the density combined with alternatingly
estimating them along a walkway of subdomains. Uniqueness then follows
from the stability estimate. From the restriction that the parameters to be
recovered lie in a finite-dimensional space, a Lipschitz stability
estimate is obtained.

The paper is organized as follows: In Section 2, we summarize the main
results. In Section 3, we construct the singular solutions and
establish the unique continuation for the system describing
time-harmonic elastic waves. We also prove the Fr\'{e}chet
differentiability of the forward map, $(\mathbb{C},\rho)
\to \Lambda_{\mathbb{C},\rho}$. In Section 4, we prove the main
result. In Section 5, we give some remarks on the problems of
identifying the Lam\'{e} parameters given the density, and identifying
the density given the Lam\'{e} parameters.

\section{Main result}

\subsection{Direct problem}

We summarize some results concerning the well-posedness of problem
(\ref{elasticity system}).

\begin{proposition}\label{direct problem}
Let $\Omega$ be a bounded Lipschitz domain in $\mathbb{R}^3$, $f\in
H^{-1}(\Omega)$ and $g\in H^{1/2}(\partial\Omega)$. Assume that
$\lambda,\mu,\rho$ satisfy (\ref{1.2}) and (\ref{1.3}). Let
$\lambda_1^0$ be the smallest Dirichlet eigenvalue of the operator
$-\operatorname{div}(\mathbb{C}_0\hat{\nabla}u)$ in $\Omega$, where
$\mathbb{C}_0=\frac{\beta_0-3\alpha_0}{2} I_3\otimes
I_3+2\alpha_0\mathbb{I}_{sym}$. Then, for any $\omega^2\in
(0,\frac{\gamma_0 \lambda_1^0}{2}]$, there exists a unique solution of
\begin{equation}\label{2.3}
\begin{cases}
\operatorname{div}(\mathbb{C}\hat{\nabla} u)+\rho\omega^2 u=f&\text{in}~\Omega\subset\mathbb{R}^3 ,\\
u=g&\text{on}~~\partial\Omega ,
\end{cases}
\end{equation}
 satisfying
\begin{equation}\label{2.2}\|u\|_{H^1(\Omega)}\leq C(\|g\|_{H^{1/2}(\partial\Omega)}+\|f\|_{H^{-1}(\Omega)}),
\end{equation}
 where $C$ depends on $\alpha_0$, $\beta_0$, $\gamma_0$ and $\lambda_1^0$.
\end{proposition}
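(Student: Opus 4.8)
The plan is to recast \eqref{2.3} in weak form and apply the Lax--Milgram theorem, the frequency restriction $\omega^2\le\frac{\gamma_0\lambda_1^0}{2}$ being exactly what renders the associated bilinear form coercive. First I would reduce to homogeneous Dirichlet data: using a bounded right inverse of the trace operator, pick $G\in H^1(\Omega)$ with $G|_{\partial\Omega}=g$ and $\|G\|_{H^1(\Omega)}\le C\|g\|_{H^{1/2}(\partial\Omega)}$, and seek $u=v+G$ with $v\in H^1_0(\Omega)$. Then $v$ solves the same system with $g$ replaced by $0$ and $f$ replaced by $\tilde f:=f-\operatorname{div}(\mathbb{C}\hat\nabla G)-\rho\omega^2 G\in H^{-1}(\Omega)$, where $\|\tilde f\|_{H^{-1}(\Omega)}\le C(\|f\|_{H^{-1}(\Omega)}+\|g\|_{H^{1/2}(\partial\Omega)})$ thanks to the $L^\infty$ bounds \eqref{1.2}--\eqref{1.3}. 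The weak formulation is then to find $v\in H^1_0(\Omega)$ with
\[
B(v,\phi):=\int_\Omega \mathbb{C}\hat\nabla v:\hat\nabla\phi\,dx-\omega^2\int_\Omega\rho\, v\cdot\phi\,dx=-\langle\tilde f,\phi\rangle
\]
for all $\phi\in H^1_0(\Omega)$.

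Boundedness of $B$ on $H^1_0(\Omega)\times H^1_0(\Omega)$ is immediate from \eqref{1.2}--\eqref{1.3} and the Cauchy--Schwarz inequality. The heart of the proof is coercivity, and this is where I expect the only genuine difficulty, since $B$ is a priori indefinite. I would assemble three ingredients. The pointwise algebraic inequality $\mathbb{C}(x)A:A\ge \mathbb{C}_0 A:A$ for every symmetric $A$ a.e. in $\Omega$ -- a direct consequence of the definition of $\mathbb{C}_0$ together with $\mu\ge\alpha_0$ and $2\mu+3\lambda\ge\beta_0$ -- gives $\int_\Omega\mathbb{C}\hat\nabla v:\hat\nabla v\ge\int_\Omega\mathbb{C}_0\hat\nabla v:\hat\nabla v$. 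The variational characterization of the first Dirichlet eigenvalue of $-\operatorname{div}(\mathbb{C}_0\hat\nabla\,\cdot\,)$, whose well-posedness rests on the strong convexity of $\mathbb{C}_0$, yields $\int_\Omega\mathbb{C}_0\hat\nabla v:\hat\nabla v\ge\lambda_1^0\int_\Omega|v|^2\,dx$ for $v\in H^1_0(\Omega)$. Finally the frequency and density bounds give $\omega^2\rho\le\frac{\gamma_0\lambda_1^0}{2}\cdot\gamma_0^{-1}=\frac{\lambda_1^0}{2}$ a.e. Splitting the elastic energy in two equal halves and using the eigenvalue bound on one half to absorb the zeroth-order term, I obtain
\[
B(v,v)\ge\tfrac12\int_\Omega\mathbb{C}_0\hat\nabla v:\hat\nabla v\,dx+\int_\Omega\Big(\tfrac{\lambda_1^0}{2}-\omega^2\rho\Big)|v|^2\,dx\ge\tfrac12\int_\Omega\mathbb{C}_0\hat\nabla v:\hat\nabla v\,dx.
\]

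It remains to convert the symmetrized-gradient energy into control of the full $H^1$ norm. By the strong convexity of $\mathbb{C}_0$, the last display is bounded below by $\frac{c_0}{2}\int_\Omega|\hat\nabla v|^2\,dx$ for some $c_0>0$ depending only on $\alpha_0,\beta_0$, and Korn's first inequality for $v\in H^1_0(\Omega)$, namely $\int_\Omega|\hat\nabla v|^2\,dx\ge\frac12\int_\Omega|\nabla v|^2\,dx$, combined with the Poincar\'e inequality, gives $B(v,v)\ge c\|v\|_{H^1(\Omega)}^2$ with $c$ depending only on $\alpha_0,\beta_0,\gamma_0,\lambda_1^0$ and $\Omega$. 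Lax--Milgram then furnishes a unique $v\in H^1_0(\Omega)$ with $\|v\|_{H^1(\Omega)}\le c^{-1}\|\tilde f\|_{H^{-1}(\Omega)}$; undoing the substitution $u=v+G$ yields existence, uniqueness, and the estimate \eqref{2.2}. The delicate point throughout is that coercivity is not automatic for the indefinite form $B$: it is purchased precisely by the hypothesis $\omega^2\le\frac{\gamma_0\lambda_1^0}{2}$, which keeps $\omega$ strictly below the first resonant frequency and leaves half of the elastic energy available to dominate the $H^1$ norm.
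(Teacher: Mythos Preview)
Your proposal is correct and follows essentially the same route as the paper: reduce to homogeneous boundary data, apply Lax--Milgram, and obtain coercivity by combining the pointwise inequality $\mathbb{C}\ge\mathbb{C}_0$, the variational characterization of $\lambda_1^0$ (to absorb the indefinite zeroth-order term using half the elastic energy), and strong convexity of $\mathbb{C}_0$ together with Korn's first inequality and Poincar\'e on the remaining half. One small refinement: the Poincar\'e constant need not be listed as a separate dependence on $\Omega$, since it can itself be bounded in terms of $\alpha_0,\beta_0,\lambda_1^0$ via the eigenvalue inequality $\lambda_1^0\|v\|_{L^2}^2\le\int_\Omega\mathbb{C}_0\hat\nabla v:\hat\nabla v\le C(\alpha_0,\beta_0)\|\nabla v\|_{L^2}^2$.
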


\begin{proof} 
Without loss of generality, we let $g = 0$. Indeed, we can always
introduce a $w = u - \tilde{g}$ where $\tilde{g} \in H^1(\Omega)$ is
such that $\tilde{g} = g$ on $\partial\Omega$, which satisfies
(\ref{2.3}) with $g = 0$. We recall that
\begin{equation}
\label{*} 
   \lambda_1^0 = \min \left\{ \int_{\Omega}
        \mathbb{C}_0 \hat{\nabla}u : \hat{\nabla}u \,
   \Big| \,u\in H^1(\Omega),\,\|u\|_{L^2(\Omega)}=1\right\} ,
\end{equation}
and observe that $\mathbb{C}\geq\mathbb{C}_0$, that is,
$(\mathbb{C}-\mathbb{C}_0)\hat{A}:\hat{A} \geq 0$ for any $3\times 3$
matrix $A$.

We consider on $H^1_0(\Omega)$ the bilinear form 
\[a(u,v)=\int_{\Omega}\mathbb{C}\hat{\nabla}u:\hat{\nabla}v\mathrm{d}x - \int_{\Omega}\omega^2\rho u\cdot v\mathrm{d}x.\]
Then we can write problem (\ref{2.3}) (for $g=0$) in the weak form,
\[a(u,v)=- \langle f,v \rangle\quad\forall v\in H^1_0(\Omega).\]
Clearly $a(\cdot,\cdot)$ is continuous. We check now that $a(\cdot,\cdot)$ is coercive. To this aim, we recall the Korn inequality
\begin{equation}
\label{**}
   \int_{\Omega} |\hat{\nabla} u|^2\mathrm{d}x
         \leq 2 \int_{\Omega} |\nabla u|^2\mathrm{d}x
\end{equation}
for any $u\in H^1_0(\Omega)$ (using the matrix norm,
$\left|A\right|^2=A:A$ for any $3\times 3$ matrix $A$). Furthermore,
\begin{eqnarray*}
	a(u,u)&=&\int_{\Omega} \mathbb{C}\hat{\nabla}u:\hat{\nabla}u\mathrm{d}x-\int_{\Omega}\omega^2\rho |u|^2\mathrm{d}x\\
	&\geq& \int_{\Omega}\mathbb{C}_0\hat{\nabla}u:\hat{\nabla}u\mathrm{d}x-\omega^2\gamma_0^{-1}\int_{\Omega} |u|^2\mathrm{d}x\\
   &=&\frac{1}{2}\int_{\Omega} \mathbb{C}_0\hat{\nabla}u:\hat{\nabla}u\mathrm{d}x
   + \frac{1}{2} 
     \left\{ \int_{\Omega}\mathbb{C}_0\hat{\nabla}u:\hat{\nabla}u\mathrm{d}x-2\omega^2\gamma_0^{-1}\int_{\Omega} |u|^2\mathrm{d}x\right\} .
\end{eqnarray*}
By (\ref{*}), the strong convexity of $\mathbb{C}_0$, the Korn inequality (\ref{**}) and the Poincar\'{e} inequality, we have
\begin{eqnarray*}
	a(u,u)	&\geq& \frac{\xi_0}{4}\int_{\Omega}\left|\nabla u\right|^2\mathrm{d}x+\frac{1}{2}\left\{ \int_{\Omega}\mathbb{C}_0\hat{\nabla}u:\hat{\nabla}u\mathrm{d}x-2\omega^2\gamma_0^{-1}\int_{\Omega} |u|^2\mathrm{d}x\right\}\\
	&\geq&\frac{\xi_0C_P}{4}\|u\|_{H^1(\Omega)}^2
\end{eqnarray*}
indeed, where $\xi_0$ depends on $\alpha_0$ and $\beta_0$ only and
$C_P$ is the Poincar\'{e} constant of $\Omega$. By the Lax-Milgram
lemma there exists a unique solution $u\in H^1_0(\Omega)$ to problem
(\ref{2.3}), and (\ref{2.2}) holds.
\end{proof}

\begin{remark}
We note that whenever $\omega$ is not in a particular countable subset
of real numbers (the set of eigenfrequencies), Problem (\ref{direct problem}) has a unique solution and estimate (\ref{2.2}) holds with the constant $C$ depending also on $\omega$.
\end{remark}

We let $\Sigma$ be an open portion of $\partial\Omega$. We denote by
$H^{1/2}_{co}(\Sigma)$ the space
\[
   H^{1/2}_{co}(\Sigma) := \{ \phi \in H^{1/2}(\partial\Omega)
      \ |\ \operatorname{supp}~ \phi \subset\Sigma \}
\]
and by $H^{-1/2}_{co}(\Sigma)$ the topological dual of
$H^{1/2}_{co}(\Sigma)$. We denote by $\langle\cdot,\cdot\rangle$ the
dual pairing between $H^{1/2}_{co}(\Sigma)$ and
$H^{-1/2}_{co}(\Sigma)$ based on the $L^2(\Sigma)$ inner product. By
Proposition \ref{direct problem} it follows that for any $\psi \in
H^{1/2}_{co}(\Sigma)$ there exists a unique vector-valued function $u
\in H^1(\Omega)$ that is a weak solution of the Dirichlet problem
(\ref{elasticity system}). We define the local Dirichlet-to-Neumann
map $\Lambda_{\mathbb{C},\rho}^{\Sigma}$ as
\[
   \Lambda_{\mathbb{C},\rho}^{\Sigma} : H^{1/2}_{co}(\Sigma)
        \ni \psi \rightarrow (\mathbb{C} \hat{\nabla} u)
                     \nu|_{\Sigma} \in H^{-1/2}_{co}(\Sigma) .
\]
We have $\Lambda_{\mathbb{C},\rho} =
\Lambda_{\mathbb{C},\rho}^{\partial\Omega}$. The map
$\Lambda_{\mathbb{C},\rho}^{\Sigma}$ can be identified with the
bilinear form on $H^{1/2}_{co}(\Sigma) \times H^{-1/2}_{co}(\Sigma)$,
\begin{equation}\label{eq:bilform}
\hat{\Lambda}_{\mathbb{C},\rho}^\Sigma(\psi,\phi):=\langle\Lambda_{\mathbb{C},\rho}^\Sigma\psi,\phi\rangle=\int_{\Omega}(\mathbb{C}\hat{\nabla}u:\hat{\nabla}v-\rho\omega^2 u \cdot v)\mathrm{d}x ,
\end{equation}
for all $\psi,\phi \in H^{1/2}_{co}(\Sigma)$, where $u$ solves
(\ref{elasticity system}) and $v$ is any $H^1(\Omega)$ function such
that $v=\phi$ on $\partial\Omega$. We shall denote by
$\|\cdot\|_\star$ the norm in
$\mathcal{L}(H^{1/2}(\Sigma),H^{-1/2}(\Sigma))$ defined by
\[
   \|T\|_\star = \sup \left\{
   \langle T\psi,\phi\rangle \ \Big|\
       \psi,\phi \in H^{1/2}_{co}(\Sigma),
    \|\psi\|_{H^{1/2}_{co}(\Sigma)}
        = \|\phi\|_{H^{1/2}_{co}(\Sigma)}=1 \right\} .
\]

\subsection{Notation and definitions}

For every $x\in \mathbb{R}^3$ we set $x=(x',x_3)$ where $x'\in
\mathbb{R}^2$ and $x_3\in\mathbb{R}$. For every $x\in \mathbb{R}^3$,
$r$ and $L$ positive real numbers we denote by $B_r(x)$, $B'_r(x')$
and $Q_{r,L}$ the open ball in $\mathbb{R}^3$ centered at $x$ of
radius $r$, the open ball in $\mathbb{R}^2$ centered at $x'$ of radius
$r$ and the cylinder $B'_r(x')\times(x_3-Lr,x_3+Lr)$, respectively;
$B_r(0)$, $B'_r(0)$ and $Q_{r,L}(0)$ will be denoted by $B_r$, $B'_r$
and $Q_{r,L}$, respectively. We will also write
$\mathbb{R}_+^3=\{(x',x_3)\in\mathbb{R}^3:x_3>0\}$,
$\mathbb{R}_-^3=\{(x',x_3)\in\mathbb{R}^3:x_3<0\}$,
$B^+_r=B_r\cap\mathbb{R}^3_+$, and $B^-_r=B_r\cap\mathbb{R}^3_-$. For
any subset $D$ of $\mathbb{R}^3$ and any $h>0$, we let
\[
   (D)_h = \{ x \in D\ |\ \operatorname{dist}(x,
            \mathbb{R}^3 \setminus D) > h \} .
\]

\begin{definition}%[{\cite{BFV}}]
Let $\Omega$ be a bounded domain in $\mathbb{R}^3$. We say that a portion $\Sigma\subset\partial\Omega$ is of Lipschitz class with constants $r_0>0,L\geq 1$ if for any point $P\in\Sigma$, there exists a rigid transformation of coordinates under which $P=0$ and
\[
   \Omega \cap Q_{r_0,L} = \{ (x',x_3) \in Q_{r_0,L}\ |\
          x_3 > \psi(x') \} ,
\]
where $\psi$ is a Lipschitz continuous function in $B'_{r_0}$ such that
\[
   \psi(0) = 0\text~{and}~\|\psi\|_{C^{0,1}(B'_{r_0})}\leq L r_0 .
\]
We say that $\Omega$ is of Lipschitz class with constants $r_0$ and
$L$ if $\partial\Omega$ is of Lipschitz class with the same constants.
\end{definition}

\subsection{Main assumptions}

Let $A,L,\alpha_0,\beta_0,\gamma_0,N$ be given positive numbers such
that $N\in\mathbb{N}$, $\alpha_0\in(0,1)$, $\beta_0\in(0,2)$,
$\gamma_0\in(0,1)$ and $L>1$. We shall refer to them as the prior
data.

In the sequel we will introduce a various constants that we will
always denote by $C$.  The values of these constants might differ from
one another, but we will always have $C > 1$.

\begin{assumption}[\cite{BFV}]\label{A1}
The domain $\Omega\subset\mathbb{R}^3$ is open and bounded with
\[
   |\Omega|\leq A,
\]
and
\[
   \bar{\Omega}=\cup_{j=1}^N\bar{D}_j,
\]
where $D_j,j=1,\ldots,N$ are connected and pairwise non-overlapping
open subdomains of Lipschitz class with constants $1,L$. Moreover,
 there exists a region, say $D_1$, such that $\partial
D_1\cap\partial\Omega$ contains an open flat part, $\Sigma$, and that
for every $j\in\{2,\ldots,N\}$ there exist
$j_1,\ldots,j_M\in\{1,\ldots,N\}$ such that
\[D_{j_1}=D_1,\quad D_{j_M}=D_j\]
and, for every $k=2,\ldots,M$
\[\partial D_{j_{k-1}}\cap\partial D_{j_k}\]
contains a flat portion $\Sigma_k$ such that
\[\Sigma_k\subset\Omega,~\text{for all}~k=2,\ldots,M.\]
Furthermore, for $k=1,\ldots,M$, there exists $P_k\in\Sigma_k$ and a rigid transformation of coordinates such that $P_k=0$ and
\[\Sigma_k\cap Q_{1/3,L}=\{x\in Q_{1/3,L}:x_3=0\},\]
\[D_{j_k}\cap Q_{1/3,L}=\{x\in Q_{1/3,L}:x_3<0\},\]
\[D_{j_{k-1}}\cap Q_{1/3,L}=\{x\in Q_{1/3,L}:x_3>0\};\]
here, we set $\Sigma_1=\Sigma$. We will refer to
$D_{j_1},\ldots,D_{j_M}$ as a chain of subdomains connecting $D_1$ to
$D_j$. For any $k\in\{1,\ldots,M\}$ we will denote by $n_k$ the
exterior unit vector to $\partial D_k$ at $P_k$.
\end{assumption}

An example of such a domain partition with Lipschitz class subdomains
is an unstructured tetrahedral mesh.

\begin{figure}[h]
\begin{center}
\centering
 \includegraphics[width=2 in]{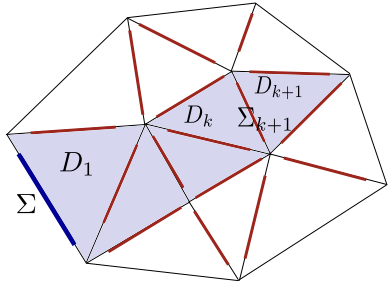}
\caption{A domain partition including $D_1$.}
\end{center}
\end{figure}

\begin{assumption}\label{A2}
The stiffness tensor, $\mathbb{C}$, is isotropic and piecewise
constant, that is,
\[
   \mathbb{C} = \sum_{j=1}^N \mathbb{C}_j\chi_{D_j}(x) ,\quad
   \mathbb{C}_j = \lambda_j I_3 \otimes I_3
               + 2 \mu_j \mathbb{I}_{sym} ,
\]
where the constants $\lambda_j$ and $\mu_j$ satisfy (cf.~(\ref{1.2}))
\begin{equation}\label{coefficient condition}
   0 < \alpha_0 \leq \mu_j \leq \alpha_0^{-1} ,\quad
   \lambda_j \leq \alpha_0^{-1} ,\quad
   2 \mu_j + 3 \lambda_j \geq \beta_0 > 0,\ j=1,\ldots,N .
\end{equation}
The density, $\rho$, is of the form,
\[
   \rho = \sum_{j=1}^N \rho_j \chi_{D_j}(x) ,
\]
where the constants $\rho_j$ satisfy (cf.~(\ref{1.3}))
\[
   0 \leq \rho_j \leq \gamma^{-1}_0 ,\ j=1,\ldots,N .
\]
\end{assumption}

\begin{assumption}\label{A3}
Let $\lambda_1^0$ be the smallest Dirichlet eigenvalue of operator
\\ $-\operatorname{div}(\mathbb{C}_0\hat{\nabla}u)$ in $\Omega$ as
before,
\[
   \omega^2 \leq \frac{\gamma_0\lambda_1^0}{2} .
\]
\end{assumption}

\subsection{Statement of the main result}

We define for any set $D \in \mathbb{R}^3$,
\[
   d_D((\mathbb{C}^1,\rho^1),(\mathbb{C}^2,\rho^2))
       = \max\{ \|\lambda^1-\lambda^2\|_{L^{\infty}(D)},
    \|\mu^1-\mu^2\|_{L^{\infty}(D)},
           \|\rho^1-\rho^2\|_{L^{\infty}(D)} \} .
\]
% Note that $\|\rho^1-\rho^2\|_{L\infty(D)}$ was given an $r_0^2$
% weight.

\begin{theorem}\label{main1}
Let $(\mathbb{C}^{1,2},\rho^{1,2})$ satisfy Assumption~\ref{A2}. Let
$\Omega$ and $\Sigma$ satisfy Assumption~\ref{A1} and $\omega$ satisfy
Assumption~\ref{A3}. If $\Lambda_{\mathbb{C}^2,\rho^2}^{\Sigma} =
\Lambda_{\mathbb{C}^1,\rho^1}^{\Sigma}$ then $\mathbb{C}^1 =
\mathbb{C}^2$ and $\rho^1 = \rho^2$. Moreover, there exists a positive
constant $C$ depending on $L$, $A$, $N$, $\alpha_0$, $\beta_0$,
$\gamma_0$ and $\lambda_1^0$ only, such that
\begin{equation}
   d_{\Omega}((\mathbb{C}^1,\rho^1),
         (\mathbb{C}^2,\rho^2)) \leq
   C \, \|\Lambda_{\mathbb{C}^1,\rho^1}^{\Sigma}
       - \Lambda_{\mathbb{C}^2,\rho^2}^{\Sigma}\|_\star .
\end{equation}
\end{theorem}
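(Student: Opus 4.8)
The plan is to follow the scheme of Alessandrini and Vessella \cite{Al2}, adapted to the elasticity system: first derive a quantitative (log-type) stability estimate by means of singular solutions and quantitative unique continuation, then upgrade it to a Lipschitz estimate exploiting the finite dimensionality of the unknowns, and finally deduce uniqueness as the zero-error case of the stability bound. Writing $\Lambda^\Sigma_i:=\Lambda^\Sigma_{\mathbb{C}^i,\rho^i}$ and letting $u_i$ solve (\ref{elasticity system}) with coefficients $(\mathbb{C}^i,\rho^i)$ and datum $\psi$, I would first record the Alessandrini-type identity that follows from the bilinear form (\ref{eq:bilform}): for all admissible $\psi,\phi$ supported in $\Sigma$,
\[\langle(\Lambda^\Sigma_1-\Lambda^\Sigma_2)\psi,\phi\rangle=\int_\Omega\big((\mathbb{C}^1-\mathbb{C}^2)\hat{\nabla}u_1:\hat{\nabla}u_2-(\rho^1-\rho^2)\omega^2 u_1\cdot u_2\big)\,\mathrm{d}x,\]
so that $\|\Lambda^\Sigma_1-\Lambda^\Sigma_2\|_\star$ controls this interior integral for every choice of the two solutions.

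The core step is the localized extraction of the coefficient jumps across a flat interface. At the flat portion $\Sigma=\Sigma_1\subset\partial\Omega$ touching $D_1$, I would insert into the identity the singular solutions constructed in Section 3: these behave, near a point whose distance $h$ to the interface is a free small parameter, like the fundamental solution of the frozen-coefficient elasticity operator, with a known blow-up rate as $h\to 0$. Choosing the polarizations of $u_1$ and $u_2$ appropriately, the leading singular part of $\int(\mathbb{C}^1-\mathbb{C}^2)\hat{\nabla}u_1:\hat{\nabla}u_2$ isolates linear combinations of $\mu_1^1-\mu_1^2$ and $\lambda_1^1-\lambda_1^2$, while the weaker singularity of $\int(\rho^1-\rho^2)\omega^2 u_1\cdot u_2$ carries $\rho_1^1-\rho_1^2$ at subleading order. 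Balancing these dominant contributions against the remainder, controlled by $\|\Lambda^\Sigma_1-\Lambda^\Sigma_2\|_\star$ after propagating the boundary smallness inward via the three-spheres / unique-continuation estimate of Section 3, and optimizing in $h$, yields a log-type bound
\[d_{D_1}((\mathbb{C}^1,\rho^1),(\mathbb{C}^2,\rho^2))\leq C\,\big|\log\|\Lambda^\Sigma_1-\Lambda^\Sigma_2\|_\star\big|^{-\eta}\]
for some $\eta>0$.

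I would then propagate along the chain. Assumption~\ref{A1} provides, for each $D_j$, a chain $D_{j_1}=D_1,\dots,D_{j_M}=D_j$ with flat interfaces $\Sigma_k$. Having estimated the coefficients in $D_{j_{k-1}}$, I repeat the singular-solution argument at $\Sigma_k$: the jump detected there is the difference between the already-controlled coefficients of $D_{j_{k-1}}$ and those of $D_{j_k}$, so the same asymptotics transfer the log-type control to $D_{j_k}$, and iterating over the (at most $N$) links and over all $j$ gives the log-type estimate on all of $\Omega$, with constants depending only on the prior data. To upgrade to Lipschitz stability, I would use the finite-dimensionality argument of \cite{Al2}: the unknowns form a vector in $\mathbb{R}^{3N}$ constrained to a compact set by (\ref{coefficient condition}) and (\ref{1.3}), and using the Fréchet differentiability of $(\mathbb{C},\rho)\mapsto\Lambda^\Sigma_{\mathbb{C},\rho}$ proved in Section 3, I argue by contradiction: if no Lipschitz constant existed, a normalized sequence of parameter differences would converge, by compactness, to a unit vector along which the Fréchet derivative of the forward map vanishes, which the log-type estimate already forbids (it forces the differential to be injective). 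This yields the stated bound, and uniqueness is its special case $\|\Lambda^\Sigma_1-\Lambda^\Sigma_2\|_\star=0$.

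The main obstacle is separating the three coefficients at a single frequency. Unlike scalar EIT, the elasticity identity mixes $\lambda$, $\mu$ and $\rho$, and the density enters only through the lower-order, weakly singular term $\omega^2 u_1\cdot u_2$; making the leading singular coefficients of the three contributions form an invertible system, so that $\lambda$, $\mu$ and $\rho$ can be disentangled from the single integral identity, is the delicate point, and it hinges on a sufficiently precise asymptotic expansion of the products of singular solutions near the flat interface.
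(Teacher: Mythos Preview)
Your overall architecture is right and matches the paper: Alessandrini identity, singular solutions at flat interfaces, propagation along the chain via quantitative unique continuation to get a log-type estimate, then a finite-dimensional upgrade to Lipschitz. The main gap is in how you propose to disentangle $\rho$ from $\lambda,\mu$. Your suggestion that the three contributions have ``leading singular coefficients'' forming an invertible system does not work as stated: in the basic quantity $\mathcal{S}_k(y_r,z_{\bar r})$ the density term $\int(\rho-\bar\rho)\,G\cdot\bar G$ stays \emph{bounded} as $r\to 0$ (since $G\in L^2$, cf.\ (\ref{singular estimate3})), so it never competes with the Lam\'e term and carries no extractable leading coefficient. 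The paper's device is to first read off the Lam\'e jumps from $\mathcal{S}_k$ (density term harmless), and only then pass to the \emph{tangentially differentiated} object $\partial_{y_1}\partial_{z_1}\mathcal{S}_k(y_r,y_r)$; the derivative raises the singularity so that $\int|\partial_{y_1}\Gamma_{k+1}|^2$ blows up like $r^{-1}$, while the now more singular Lam\'e contribution is tamed by the \emph{already obtained} bound $|\lambda_{k+1}-\bar\lambda_{k+1}|+|\mu_{k+1}-\bar\mu_{k+1}|\le C\sigma(\omega_k(\epsilon))$. This alternating two-step (Lam\'e first, then density via tangential derivatives) is the heart of the multiparameter argument and is not captured by ``choosing polarizations'' or ``an invertible system''.

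A second, smaller issue concerns the Lipschitz upgrade. Your compactness-by-contradiction reduces to showing $DF(\underline L)[\underline H]\neq 0$ for all $\underline L\in\mathbf K$ and $\|\underline H\|_\infty=1$; this does \emph{not} follow from the log-type estimate for $F$ (a log modulus for $F^{-1}$ is compatible with $DF$ having a kernel), so you still need to rerun the singular-solution argument on the linearized identity (\ref{derivative}) to get the lower bound for $DF$, exactly as the paper does in Proposition~\ref{inject DF}. Moreover, a bare compactness argument yields \emph{some} Lipschitz constant but not one depending only on the listed a~priori data; the paper instead feeds the quantitative log estimate for $F^{-1}$, the Lipschitz constant of $DF$, and the explicit lower bound $q_0$ for $DF$ into the abstract result of Bacchelli--Vessella \cite{BaV}, which produces a constant with the stated dependence.
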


\medskip

In preparation of the proof, we introduce the forward map associated
with the inverse problem. We let $\underline{L} :=
(\lambda_1,\ldots,\lambda_N, \mu_1,\ldots,\mu_N,
\rho_1,\ldots,\rho_N)$ denote a vector in $\mathbb{R}^{3N}$ and
$\mathcal{A}$ stand for the open subset of $\mathbb{R}^{3N}$ defined
by
\begin{equation}
   \mathcal{A} := \! \left\{
   \underline{L} \in \mathbb{R}^{2N}\ \Big|\
   \frac{\alpha_0}{2}<\mu_j< \frac{2}{\alpha_0}, \lambda_j<\frac{2}{\alpha_0},2\mu_j+3\lambda_j> \frac{\beta_0}{2},\frac{\gamma_0}{2}<\rho_j< \frac{2}{\gamma_0},j=1,\ldots,N \right\}.
\end{equation}
For each vector $\underline{L}\in\mathcal{A}$ we can define a
piecewise constant stiffness tensor $\mathbb{C}_{\underline{L}}$, and
a density $\rho_{\underline{L}}$, with
\[\|\underline{L}\|_\infty=\max_{j=1,\ldots,N}\{\sup\{|\lambda_j|,\mu_j,|\rho_j|\}\}.\]
The forward map is defined as
\begin{equation}
   F :\ \mathcal{A} \rightarrow
       \mathcal{L}(H^{1/2}_{co}(\Sigma),H^{-1/2}_{co}(\Sigma)) ,
\quad
   \underline{L} \rightarrow F(\underline{L})
       = \Lambda_{\mathbb{C}_{\underline{L}},
          \rho_{\underline{L}}}^\Sigma .
\end{equation}
We can identify $F$ with a map $\tilde{F} : \mathcal{A} \rightarrow
\mathcal{B}$ upon identifying $\tilde{F}(\underline{L})$ with the
bilinear form, $\tilde{\Lambda}_{\mathbb{C}_{\underline{L}},
  \rho_{\underline{L}}}^{\Sigma}$, on $H^{1/2}_{co}(\Sigma) \times
H^{-1/2}_{co}(\Sigma)$ (cf.~(\ref{eq:bilform})); $\mathcal{B}$ is the
Banach space of this bilinear form with the standard norm.  In the
sequel, we will write $F$ and $\Lambda_{\mathbb{C}_{\underline{L}},
  \rho_{\underline{L}}}^{\Sigma}$ instead of $\tilde{F}$ and
$\tilde{\Lambda}_{\mathbb{C}_{\underline{L}},
  \rho_{\underline{L}}}^{\Sigma}$. We denote
\[
   \mathbf{K}:=\{\underline{L}\in\mathcal{A}\ |\ \alpha_0\leq\mu_j\leq\alpha_0^{-1},\lambda_j\leq\alpha_0^{-1},
2\mu_j+3\lambda_j\geq\beta_0,0\leq\rho_j\leq\gamma_0^{-1},j=1,\ldots,N\}.
\]
Then the stability estimate in Theorem \ref{main1} can be stated as
follows:
\[
   \|\underline{L}^1 - \underline{L}^2\|_\infty
       \leq C \|F(\underline{L}^1) - F(\underline{L}^2)\|_\star ,
\]
for every $\underline{L}^1,\underline{L}^2$ in $\mathbf{K}$. We note
that Theorem \ref{main1} implies that $F$ is injective and that its
inverse is Lipschitz continuous.

\begin{remark}
Assumption~\ref{A3} in Theorem \ref{main1} can be relaxed to include
any $\omega$ that is not in the set of eigenfrequencies. Then the
constant $C$ will also depend on the distance between $\omega$ and the
set of eigenfrequencies.
\end{remark}

\section{Preliminary results}

Here, we follow Beretta \textit{et al.} \cite{BFV, BFMRV}. We
summarize the relevant results in their work and adapt them to the
time-harmonic problem. We begin this section with Alessandrini's
identity \cite{Al1, Is}. We let $u_k$ be solutions to
\[
   \operatorname{div}(\mathbb{C}^k\hat{\nabla}u_k)+\rho^k\omega^2u_k=0\quad\text{in}~\Omega
\]
for $k=1,2$, where $\mathbb{C}^k, \rho^k$ satisfy Assumption \ref{A2}. Then
\begin{equation}\label{alessandrini}
\int_\Omega\left((\mathbb{C}^1-\mathbb{C}^2)\hat{\nabla}u_1:\hat{\nabla}u_2-(\rho^1-\rho^2)\omega^2u_1\cdot u_2\right)\mathrm{d}x=\langle(\Lambda_{\mathbb{C}^1,\rho^1}-\Lambda_{\mathbb{C}^2,\rho^2})u_1,u_2\rangle.
\end{equation}

\subsection{Fr\'{e}chet differentiability of $F$}
~~\\
Here, we prove the Fr\'{e}chet differentiability of the forward map, $F$.
\begin{proposition}
Under Assumptions \ref{A1}, \ref{A2} and \ref{A3}, the map
\[F:\mathcal{A}\rightarrow\mathcal{L}(H^{1/2}_{co}(\Sigma),H^{-1/2}_{co}(\Sigma))\]
 is Frech\'{e}t differentiable in $\mathcal{A}$ and
\begin{equation}\label{derivative}
\langle DF(\underline{L})[\underline{H}]\psi,\phi\rangle=\int_{\Omega}\left(\mathbb{H}\hat{\nabla}u_{\underline{L}}:\hat{\nabla}v_{\underline{L}}-h\omega^2u_{\underline{L}}\cdot v_{\underline{L}}\right)\mathrm{d}x,
\end{equation}
where $\mathbb{H}=\mathbb{C}_{\underline{H}},h=\rho_{\underline{H}}$.
Moreover, $DF :\ \mathcal{A}\rightarrow\mathcal{L}(\mathbb{R}^{3N},
\mathcal{L}(H^{1/2}_{co}(\Sigma),H^{-1/2}_{co}(\Sigma)))$ is Lipschitz
continuous with Lipschitz constant $C_{DF}$ depending on $A$, $L$,
$\alpha_0$, $\beta_0$, $\gamma_0$, $\lambda_1^0$ only.
\end{proposition}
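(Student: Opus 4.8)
The plan is to prove differentiability by a direct computation based on Alessandrini's identity (\ref{alessandrini}), controlling the correction terms through the a priori estimate (\ref{2.2}) of Proposition \ref{direct problem}. First I would fix $\underline{L}\in\mathcal{A}$ and a perturbation $\underline{H}$ small enough that $\underline{L}+\underline{H}\in\mathcal{A}$, and denote by $u_{\underline{L}}$, $u_{\underline{L}+\underline{H}}$ the solutions of (\ref{elasticity system}) with boundary datum $\psi$ and parameters $\underline{L}$, $\underline{L}+\underline{H}$ respectively, and by $v_{\underline{L}}$ the solution with datum $\phi$ at parameter $\underline{L}$. Since $\underline{L}\mapsto\mathbb{C}_{\underline{L}}$ and $\underline{L}\mapsto\rho_{\underline{L}}$ are linear, $\mathbb{C}_{\underline{L}+\underline{H}}-\mathbb{C}_{\underline{L}}=\mathbb{H}$ and $\rho_{\underline{L}+\underline{H}}-\rho_{\underline{L}}=h$. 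Applying (\ref{alessandrini}) with $(\mathbb{C}^1,\rho^1)=(\mathbb{C}_{\underline{L}+\underline{H}},\rho_{\underline{L}+\underline{H}})$, $(\mathbb{C}^2,\rho^2)=(\mathbb{C}_{\underline{L}},\rho_{\underline{L}})$, $u_1=u_{\underline{L}+\underline{H}}$ and $u_2=v_{\underline{L}}$ gives
\begin{equation*}
\langle (F(\underline{L}+\underline{H})-F(\underline{L}))\psi,\phi\rangle = \int_{\Omega}\left(\mathbb{H}\hat{\nabla}u_{\underline{L}+\underline{H}}:\hat{\nabla}v_{\underline{L}}-h\omega^2 u_{\underline{L}+\underline{H}}\cdot v_{\underline{L}}\right)\mathrm{d}x .
\end{equation*}

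Next I would subtract the candidate derivative (\ref{derivative}) and set $w:=u_{\underline{L}+\underline{H}}-u_{\underline{L}}$, so that the remainder equals $\int_{\Omega}(\mathbb{H}\hat{\nabla}w:\hat{\nabla}v_{\underline{L}}-h\omega^2 w\cdot v_{\underline{L}})\,\mathrm{d}x$. Subtracting the two elasticity systems, $w$ vanishes on $\partial\Omega$ and solves (\ref{2.3}) at parameter $\underline{L}$ with source $f=-\operatorname{div}(\mathbb{H}\hat{\nabla}u_{\underline{L}+\underline{H}})-h\omega^2 u_{\underline{L}+\underline{H}}$. Since $\|f\|_{H^{-1}(\Omega)}\leq C\|\underline{H}\|_\infty\|u_{\underline{L}+\underline{H}}\|_{H^1(\Omega)}$ and, by (\ref{2.2}), $\|u_{\underline{L}+\underline{H}}\|_{H^1(\Omega)}\leq C\|\psi\|_{H^{1/2}(\partial\Omega)}$, a second application of (\ref{2.2}) yields $\|w\|_{H^1(\Omega)}\leq C\|\underline{H}\|_\infty\|\psi\|_{H^{1/2}(\partial\Omega)}$. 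Combined with $\|v_{\underline{L}}\|_{H^1(\Omega)}\leq C\|\phi\|_{H^{1/2}(\partial\Omega)}$ and $\|\mathbb{H}\|,|h|\leq C\|\underline{H}\|_\infty$, the remainder is bounded by $C\|\underline{H}\|_\infty^2\|\psi\|_{H^{1/2}(\partial\Omega)}\|\phi\|_{H^{1/2}(\partial\Omega)}$. Taking the supremum over unit data gives $\|F(\underline{L}+\underline{H})-F(\underline{L})-DF(\underline{L})[\underline{H}]\|_\star\leq C\|\underline{H}\|_\infty^2=o(\|\underline{H}\|_\infty)$, which establishes Fr\'{e}chet differentiability and identifies $DF$ as in (\ref{derivative}).

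For the Lipschitz continuity of $DF$, given $\underline{L}^1,\underline{L}^2\in\mathcal{A}$ I would denote by $u^i,v^i$ the solutions at parameter $\underline{L}^i$ with data $\psi,\phi$, and use the algebraic splitting $\hat{\nabla}u^1:\hat{\nabla}v^1-\hat{\nabla}u^2:\hat{\nabla}v^2=\hat{\nabla}(u^1-u^2):\hat{\nabla}v^1+\hat{\nabla}u^2:\hat{\nabla}(v^1-v^2)$, together with the analogous splitting of the density terms, in the difference $DF(\underline{L}^1)[\underline{H}]-DF(\underline{L}^2)[\underline{H}]$. Exactly as above, $u^1-u^2$ has zero boundary data and solves the system at parameter $\underline{L}^1$ with a source of size $C\|\underline{L}^1-\underline{L}^2\|_\infty\|u^2\|_{H^1(\Omega)}$, so (\ref{2.2}) gives $\|u^1-u^2\|_{H^1(\Omega)}\leq C\|\underline{L}^1-\underline{L}^2\|_\infty\|\psi\|_{H^{1/2}(\partial\Omega)}$, and similarly for $v^1-v^2$. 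Collecting the estimates and taking suprema yields $\|DF(\underline{L}^1)-DF(\underline{L}^2)\|\leq C\|\underline{L}^1-\underline{L}^2\|_\infty$, the desired Lipschitz bound with $C_{DF}$ depending only on the prior data.

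The main point requiring care is not the bilinear bookkeeping but ensuring that the a priori estimate (\ref{2.2}) holds with a constant $C$ that is uniform over $\mathcal{A}$ (equivalently, over the segment joining $\underline{L}$ to $\underline{L}+\underline{H}$), so that all constants depend only on $\alpha_0,\beta_0,\gamma_0,\lambda_1^0$ and not on the particular $\underline{L}$. This requires a uniform coercivity estimate on $\mathcal{A}$: the enlarged bounds defining $\mathcal{A}$ (with the factors $\tfrac12$ and $2$) must remain compatible with Assumption \ref{A3}, so that the bilinear form $a(\cdot,\cdot)$ of Proposition \ref{direct problem} stays coercive with a uniform constant. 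Granting this uniform well-posedness, the two smallness factors---one from $\mathbb{H},h$ and one from $w=u_{\underline{L}+\underline{H}}-u_{\underline{L}}$---combine to produce the quadratic remainder underlying differentiability, while the single such factor in the telescoped differences produces the Lipschitz estimate.
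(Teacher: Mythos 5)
Your proposal is correct and follows essentially the same route as the paper's proof: Alessandrini's identity (\ref{alessandrini}) to represent $\langle(F(\underline{L}+\underline{H})-F(\underline{L}))\psi,\phi\rangle$, subtraction of the candidate derivative leaving a remainder in $w=u_{\underline{L}+\underline{H}}-u_{\underline{L}}$, the observation that $w$ solves the system with zero boundary data and a source of size $C\|\underline{H}\|_\infty\|\psi\|_{H^{1/2}_{co}(\Sigma)}$ so that two applications of the a priori estimate (\ref{2.2}) give the quadratic remainder bound, and the same telescoping of solution differences for the Lipschitz continuity of $DF$. Your closing caveat---that (\ref{2.2}) must hold with a constant uniform over $\mathcal{A}$, whose relaxed bounds (factors $\tfrac12$ and $2$) must remain compatible with the coercivity argument under Assumption \ref{A3}---is a legitimate point of care that the paper itself passes over silently, and it does not change the structure of the argument.
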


\begin{proof}
Fix $\underline{L}\in\mathcal{A}$ and let $\underline{H}\in\mathbb{R}^{3N}$ such that $\|\underline{H}\|_\infty$ is sufficiently small. By (\ref{alessandrini}) we have
\[\langle(F(\underline{L}+\underline{H})-F(\underline{L}))\psi,\phi\rangle=\int_{\Omega}\mathbb{H}\hat{\nabla}u_{\underline{L}+\underline{H}}:\hat{\nabla}v_{\underline{L}}\mathrm{d}x-\int_{\Omega} h\omega^2u_{\underline{L}+\underline{H}}\cdot v_{\underline{L}}\mathrm{d}x.\]
Hence, by setting
\begin{multline}
\eta:=\langle(F(\underline{L}+\underline{H})-F(\underline{L}))\psi,\phi\rangle-\int_{\Omega}\mathbb{H}\hat{\nabla}u_{\underline{L}}:\hat{\nabla}v_{\underline{L}}\mathrm{d}x+\int_{\Omega} h\omega^2u_{\underline{L}}\cdot v_{\underline{L}}\mathrm{d}x
\\
 = \int_{\Omega}\mathbb{H}\hat{\nabla}(u_{\underline{L}+\underline{H}}-u_{\underline{L}}):\hat{\nabla}v_{\underline{L}}\mathrm{d}x-\int_{\Omega}h\omega^2(u_{\underline{L}+\underline{H}}-u_{\underline{L}})\cdot v_{\underline{L}}\mathrm{d}x,
\end{multline}
we find that
\begin{equation}\label{49}
|\eta|\leq C\|\underline{H}\|_\infty\|\nabla(u_{\underline{L}+\underline{H}}-u_{\underline{L}})\|_{L^2(\Omega)}\|\phi\|_{H^{1/2}_{co}(\Sigma)},
\end{equation}
where $C$ depends on $A,L,\alpha_0,\beta_0,\gamma_0,\lambda_1^0$
only. We estimate $\|\nabla(u_{\underline{L} + \underline{H}} -
u_{\underline{L}})\|_{L^2(\Omega)}$. We observe that
$w:=u_{\underline{L}+\underline{H}}-u_{\underline{L}}$ is the solution
to
\begin{equation}
\begin{cases}
\operatorname{div}(\mathbb{C}_{\underline{L}}\hat{\nabla}w)+\rho\omega^2 w=-\text{div}(\mathbb{H}\hat{\nabla}u_{\underline{L}+\underline{H}})-h\omega^2u_{\underline{L}+\underline{H}}&\text{in}~\Omega,\\
w=0&\text{on}~\partial\Omega.
\end{cases}
\end{equation}
By Proposition \ref{direct problem}, we have
\begin{equation}\label{51}
\begin{split}
\|\nabla w\|_{L^2(\Omega)}&\leq C \|w\|_{H^1(\Omega)}\\
&\leq C\|\text{div}(\mathbb{H}\hat{\nabla}u_{\underline{L}+\underline{H}})\|_{H^{-1}(\Omega)}+C\|h\omega^2u_{\underline{L}+\underline{H}}\|_{H^{-1}(\Omega)}\\
&\leq C\|\mathbb{H}\hat{\nabla}u_{\underline{L}+\underline{H}}\|_{L^2(\Omega)}+C\|h\omega^2u_{\underline{L}+\underline{H}}\|_{H^{-1}(\Omega)}\\
&\leq C\|\underline{H}\|_\infty\|u_{\underline{L}+\underline{H}}\|_{H^1(\Omega)}+C\|\underline{H}\|_\infty\|u_{\underline{L}+\underline{H}}\|_{L^{2}(\Omega)}\\
&\leq C\|\underline{H}\|_\infty\|\psi\|_{H^{1/2}_{co}(\Sigma)},
\end{split}
\end{equation}
where $C$ depends on $A,L,\alpha_0,\beta_0,\gamma_0,\lambda_1^0$.
By inserting (\ref{51}) into (\ref{49}) we get
\begin{equation}\label{52}
|\eta|\leq C\|\underline{H}\|^2_\infty\|\psi\|_{H^{1/2}_{co}(\Sigma)}\|\phi\|_{H^{1/2}_{co}(\Sigma)},
\end{equation}
that yields (\ref{derivative}).

We now prove the Lipschitz continuity of $DF$. Let $\underline{L}^1,\underline{L}^2\in\mathcal{A}$ and set
\[
\begin{split}
\xi:=&\langle(DF(\underline{L}^2)-DF(\underline{L}^1))[\underline{H}]\psi,\phi\rangle\\
=&\int_{\Omega}\left(\mathbb{H}\hat{\nabla}u_{\underline{L}^2}:v_{\underline{L}^2}-\mathbb{H}\hat{\nabla}u_{\underline{L}^1}:v_{\underline{L}^1}\right)\mathrm{d}x+\int_{\Omega}\left(h\omega^2u_{\underline{L}^2}\cdot v_{\underline{L}^2}-h\omega^2u_{\underline{L}^1}\cdot v_{\underline{L}^1}\right)\mathrm{d}x\\
=&\int_{\Omega}\mathbb{H}(\hat{\nabla}u_{\underline{L}^2}-\hat{\nabla}u_{\underline{L}^1}):\hat{\nabla}v_{\underline{L}^2}\mathrm{d}x+\int_{\Omega}\mathbb{H}\hat{\nabla}u_{\underline{L}^1}:(\hat{\nabla}v_{\underline{L}^2}-\hat{\nabla}v_{\underline{L}^1})\mathrm{d}x\\
&+\int_{\Omega}h\omega^2(u_{\underline{L}^2}-u_{\underline{L}^1})\cdot v_{\underline{L}^2}\mathrm{d}x+\int_{\Omega}h\omega^2u_{\underline{L}^1}\cdot(v_{\underline{L}^2}-v_{\underline{L}^1})\mathrm{d}x.
\end{split}\]
By reasoning as we did to derive (\ref{52}) we obtain
\[|\xi|\leq C_{DF}\|\underline{H}\|_\infty\|\underline{L}^2-\underline{L}^1\|_\infty\|\psi\|_{H^{1/2}_{co}(\Sigma)}\|\phi\|_{H^{1/2}_{co}(\Sigma)},\]
where $C_{DF}$ depends on $A,L,\alpha_0,\beta_0,\gamma_0,\lambda_1^0$.
\end{proof}

\subsection{Further notation and definitions}
~~\\
\textbf{Construction of an augmented domain and extension of $\mathbb{C}$ and $\rho$.} First we extend the domain $\Omega$ to a new domain $\Omega_0$ such that $\partial\Omega_0$ is of Lipschitz class and $B_{1/C}(P_1)\cap\Sigma\subset\Omega_0$, for some suitable constant $C\geq 1$ depending only on $L$. We proceed as in \cite{Al-Ro-R-Ve}. We set
\begin{equation}\label{rho1}
\eta_1=1/C_L, \text{where}~C_L=\frac{3\sqrt{1+L^2}}{L},
\end{equation}
and define, for every $x'\in B'_{\frac{1}{3}}$
\[
\psi^+(x')=\begin{cases}
\frac{\eta_1}{2}&\text{for}~|x'|\leq\frac{\eta_1}{4L}\\
\eta_1-2L|x'|&\text{for}~\frac{\eta_1}{4L}<|x'|\leq\frac{\eta_1}{2L}\\
0&\text{for}~|x'|>\frac{\eta_1}{2L}.
\end{cases}\]
We observe that for every $x'\in B'_{1/3}$, $|\psi^+(x')|\leq\frac{\eta_1}{2}$ and $|\nabla_{x'}\psi^+(x')|\leq 2L$. Next, we denote by
\[
   D_0 = \{ x=(x',x_3) \in Q_{1/3,L}\ |\ 0 \leq x_3<\psi^+(x')\},
\]
\[
   \Omega_0 = \Omega \cup D_0.
\]
We have
\begin{enumerate}[i)]
\item $\Omega_0$ has a Lipschitz boundary with constants $\frac{1}{3},3L$;
\item \[\Omega_0\supset Q_{1/4LC_L,L}.\]
\end{enumerate}

Let $\mathbb{C}$ be an isotropic tensor that satisfies Assumption
\ref{A2}. We extend $\mathbb{C}$ to $\Omega_0$ such that
$\mathbb{C}|_{D_0} = \mathbb{C}_0$. We
also extend $\rho$ such that $\rho|_{D_0}=1$. Then $\mathbb{C},\rho$
are of the form
\begin{equation}\label{C extension}
\mathbb{C}=\sum_{j=0}^N\mathbb{C}_j\chi_{D_j}(x),
\end{equation}
\begin{equation}\label{density extension}
\rho=\sum_{j=0}^N\rho_j\ \chi_{D_j}(x).
\end{equation}
\textbf{Construction of a walkway.} We fix $j\in\{1,\ldots,N\}$ and let $D_{j_1},\ldots,D_{j_M}$ be a chain of domains connecting $D_1$ to $D_j$. We set $D_k=D_{j_k}$, $k=1,\ldots,M$.
By \cite{Al-Ro-R-Ve} Proposition 5.5, there exists $C'_L\geq 1$ depending on $L$ only, such that $(D_k)_h$ is connected for every $k\in\{1,\ldots,M\}$ and every $h\in(0,1/C'_L)$. We introduce
\begin{equation}\label{h0}
h_0=\min\left\{\frac{1}{6},\frac{1}{C'_L},\frac{\eta_1}{8\sqrt{1+4L^2}}\right\}
\end{equation}
where $\eta_1$ is as in (\ref{rho1}).\\[0.25cm]
Furthermore
\begin{enumerate}[i)]
\item $Q_{(k)}$, $k=1,\ldots, M$, is the cylinder centered at $P_k$ such that by a rigid transformation of coordinates under which $P_k=0$ and $\Sigma_k$ belongs to the plane $\{(x',0)\}$, and $Q_{(k)}=Q_{\eta_1/4L,L}$. We also denote $Q^-_{(M)}=Q_{(M)}\cap D_{M-1}$;
\item $\mathcal{K}$ is the interior part of the set $\bigcup_{k=1}^{M-1}\bar{D}_i$;
\item $\mathcal{K}_h=\bigcup_{k=1}^{M-1}(D_i)_h$, for every $h\in(0,h_0)$;
\item
\begin{equation}\label{3.7}
\tilde{\mathcal{K}}_h=\mathcal{K}_h\cup Q_{(M)}^-\cup\bigcup_{k=1}^{M-1}Q_{(k)};
\end{equation}
\item
\[K_0=\left\{x\in D_0\ |\ \operatorname{dist}(x,\partial\Omega)>\frac{\eta_1}{8}\right\}.\]
\end{enumerate}
It is straightforward to verify that $\tilde{K}_h$ is connected and of Lipschitz class for every $h\in (0,h_0)$ and that
\begin{equation}
K_0\supset B'_{\eta_1/4L}(P_1)\times\left(\frac{\eta_1}{8},\frac{\eta_1}{4}\right).
\end{equation}

\begin{figure}[h]
\begin{center}
\centering
 \includegraphics[width=2 in]{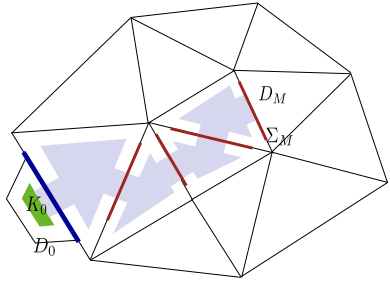}
\caption{A path or walkway.}
\end{center}
\end{figure}

\subsection{Existence of singular solutions}\label{section 3.2}
~~\\
Next, we construct singular solutions to the system describing
time-harmonic elastic waves. We prove the stability estimates for our
inverse problems by studying the behavior of singular solutions.

\subsubsection{Static fundamental solution in the biphase laminate}

In order to construct singular solutions, we make use of special
fundamental solutions constructed by Rongved \cite{R} for isotropic
biphase laminates. Consider
\[\mathbb{C}_b=\mathbb{C}^+\chi_{\mathbb{R}^3_+}+\mathbb{C}^-\chi_{\mathbb{R}^3_-},\]
where $\mathbb{C}^+$ and $\mathbb{C}^-$ are constant isotropic stiffness tensors given by
\[\mathbb{C}^+=\lambda I_3\otimes I_3+2\mu\mathbb{I}_{sym},~~\mathbb{C}^-=\lambda' I_3\otimes I_3+2\mu'\mathbb{I}_{sym},\]
with $\lambda,\mu$ and $\lambda',\mu'$ satisfying (\ref{coefficient
  condition}).

By \cite{R}, there exists a fundamental solution $\Gamma:\{(x,y)\ |\ x\in\mathbb{R}^3,y\in\mathbb{R}^3,x\neq y\}\rightarrow\mathbb{R}^{3\times 3}$ such that
\[
\operatorname{div}(\mathbb{C}_b\hat{\nabla}\Gamma(\cdot,y))=-\delta_y
I_3.
\]
Here $\delta_y$ is the Dirac distribution concentrated at $y$. We
point out some properties of $\Gamma$. First of all, it is a
fundamental solution, in the sense that $\Gamma(x,y)$ is continuous in
$\{(x,y)\in\mathbb{R}^3\times\mathbb{R}^3\ |\ x \neq y\}$,
$\Gamma(x,\cdot)$ is locally integrable in $\mathbb{R}^3$ for all
$x\in\mathbb{R}^3$, and, for every vector valued function $\phi\in
C_0^\infty(\mathbb{R}^3)$, we have
\[\int_{\mathbb{R}^3}\mathbb{C}_b\hat{\nabla}\Gamma(\cdot,y):\hat{\nabla}\phi\,\mathrm{d}x=\phi(y).\]
Furthermore, for every $x,y\in \mathbb{R}^3, x\neq y$, we have
\[|\Gamma(x,y)|\leq\frac{C}{|x-y|}
\]
and
\[|\nabla\Gamma(x,y)|\leq\frac{C}{|x-y|^2},\]
while for any $r>0$,
\begin{equation}\label{Funda solution estimate}
\|\nabla\Gamma(\cdot,y)\|_{L^2(\mathbb{R}^3\setminus B_r(y))}\leq\frac{C}{r^{1/2}},
\end{equation}
where $C$ depends on $\alpha_0,\beta_0$ only.

\subsubsection{Time-harmonic singular solutions}

Let $\mathfrak{F}$ denote the union of the flats parts of
$\cup_{j=1}^N\partial D_j$. Let $\mathcal{G}=\cup_{j=0}^N\partial
D_j\setminus\mathfrak{F}$. Let
$\mathbb{C}=\sum_{j=0}^N\mathbb{C}_j\chi_{D_j}$ where the tensors
$\mathbb{C}_j$ satisfy Assumption \ref{A2}. Let
$y\in\Omega_0\setminus\mathcal{G}$ and let
$r=\min(1/4,\text{dist}(y,\mathcal{G}\cup\partial\Omega_0))$. Then, in
the ball $B_r(y)$, either $\mathbb{C}$ is constant,
$\mathbb{C}=\mathbb{C}_j$ or
$\mathbb{C}=\mathbb{C}_j+(\mathbb{C}_{j+1}-\mathbb{C}_{j})\chi_{\{x_3>a\}}$
for some $a$ with $|a|<r$. We write
\begin{equation*}
\mathbb{C}_{y}=\begin{cases}
\mathbb{C}_j~~~\text{if}~\mathbb{C}=\mathbb{C}_j~\text{in}~B_r(y),\\
\mathbb{C}_j+(\mathbb{C}_{j+1}-\mathbb{C}_j)\chi_{\{x_3>a\}}~~ \text{otherwise},
\end{cases}
\end{equation*}
and consider the biphase fundamental solution satisfying
\begin{equation*}
\operatorname{div}(\mathbb{C}_{y}\hat{\nabla}\Gamma(\cdot,y))=-\delta_{y}I_3~\text{in}~~ \mathbb{R}^3.
\end{equation*}

\begin{proposition}\label{Singular construction}
Let $\Omega_0$, $\mathbb{C}$ and $\omega$ satisfy Assumptions
\ref{A1}, \ref{A2} and \ref{A3}. Then, for
$y\in\Omega_0\setminus\mathcal{G}$, there exists only one function
$G(\cdot,y)$, which is continuous in $\Omega\setminus\{y\}$, such that
\begin{equation}
\int_{\Omega_0}\left(\mathbb{C}\hat{\nabla}G(\cdot,y):\hat{\nabla}\phi-\rho\omega^2 G(\cdot,y)\cdot\phi\right)\mathrm{d}x=\phi(y),~~\forall\phi\in C^\infty_0(\Omega_0),
\end{equation}
and
\begin{equation*}
G(\cdot,y)=0~~\text{on}~\partial\Omega_0.
\end{equation*}
Furthermore, if
$\operatorname{dist}(y,\mathcal{G}\cup\partial\Omega_0)\geq\frac{1}{c_1}$
for some $c_1>1$ then
\begin{eqnarray}
\|G(\cdot,y)-\Gamma(\cdot,y)\|_{H^1(\Omega_0)}\leq C\label{singular estimate1},\\
\|G(\cdot,y)\|_{H^1(\Omega_0\setminus B_r(y))}\leq Cr^{-1/2}\label{singular estimate2},\\
\|G(\cdot,y)\|_{L^2(\Omega_0)}\leq C\label{singular estimate3},
\end{eqnarray}
where $C$ depends on $\alpha_0,\beta_0,A,L,\gamma_0,\lambda_1^0$ and on $c_1$.
\end{proposition}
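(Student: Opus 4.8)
The plan is to build $G(\cdot,y)$ as a perturbation of the biphase static fundamental solution $\Gamma(\cdot,y)$, writing $G(\cdot,y)=\Gamma(\cdot,y)+w$ and solving for the regular correction $w\in H^1(\Omega_0)$. The point of choosing $\Gamma$ associated with the local tensor $\mathbb{C}_y$ is that $\mathbb{C}=\mathbb{C}_y$ throughout $B_r(y)$ by the very definition of $\mathbb{C}_y$, so $\operatorname{div}(\mathbb{C}\hat{\nabla}\Gamma(\cdot,y))=\operatorname{div}(\mathbb{C}_y\hat{\nabla}\Gamma(\cdot,y))=-\delta_y I_3$ near $y$ and the Dirac mass is absorbed entirely by $\Gamma$. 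Substituting $G=\Gamma+w$ into the required weak identity and splitting $\mathbb{C}=\mathbb{C}_y+(\mathbb{C}-\mathbb{C}_y)$, the defining property of $\Gamma$ cancels the term $\phi(y)$, and $w$ is seen to solve
\begin{equation*}
\begin{cases}
\operatorname{div}(\mathbb{C}\hat{\nabla}w)+\rho\omega^2 w = -\operatorname{div}\bigl((\mathbb{C}-\mathbb{C}_y)\hat{\nabla}\Gamma(\cdot,y)\bigr)-\rho\omega^2\Gamma(\cdot,y) & \text{in}~\Omega_0,\\
w = -\Gamma(\cdot,y) & \text{on}~\partial\Omega_0,
\end{cases}
\end{equation*}
where the crucial feature is that $\mathbb{C}-\mathbb{C}_y$ is supported in $\Omega_0\setminus B_r(y)$, i.e. away from the singularity of $\Gamma$.

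The next step is to verify that the right-hand side is a genuine $H^{-1}(\Omega_0)$ datum with norm controlled by the prior data. Because $\mathbb{C}-\mathbb{C}_y$ vanishes on $B_r(y)$, we have $\|(\mathbb{C}-\mathbb{C}_y)\hat{\nabla}\Gamma(\cdot,y)\|_{L^2(\Omega_0)}\leq C\|\nabla\Gamma(\cdot,y)\|_{L^2(\mathbb{R}^3\setminus B_r(y))}\leq Cr^{-1/2}$ by (\ref{Funda solution estimate}), and $\|\rho\omega^2\Gamma(\cdot,y)\|_{L^2(\Omega_0)}\leq C$ since $|\Gamma(\cdot,y)|\leq C|x-y|^{-1}$ is square-integrable over the bounded set $\Omega_0$. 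The hypothesis $\operatorname{dist}(y,\mathcal{G}\cup\partial\Omega_0)\geq 1/c_1$ forces $r=\min(1/4,\operatorname{dist}(y,\mathcal{G}\cup\partial\Omega_0))$ to be bounded below, hence $r^{-1/2}\leq C$; the same distance bound makes $\Gamma(\cdot,y)$ smooth on $\partial\Omega_0$, so $\|\Gamma(\cdot,y)\|_{H^{1/2}(\partial\Omega_0)}\leq C$. Since Assumption~\ref{A3} places $\omega^2$ below the relevant eigenvalue, Proposition~\ref{direct problem} applies on $\Omega_0$ and yields a unique $w\in H^1(\Omega_0)$ with $\|w\|_{H^1(\Omega_0)}\leq C$, thereby producing $G(\cdot,y)=\Gamma(\cdot,y)+w$.

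The three estimates then follow directly. Estimate (\ref{singular estimate1}) is precisely $\|w\|_{H^1(\Omega_0)}\leq C$. For (\ref{singular estimate2}) we add the contributions on $\Omega_0\setminus B_r(y)$: the $\Gamma$ part is bounded by $Cr^{-1/2}$ through (\ref{Funda solution estimate}) together with the $L^2$ bound on $\Gamma$, while $\|w\|_{H^1}\leq C\leq Cr^{-1/2}$ because $r\leq 1/4$. Estimate (\ref{singular estimate3}) combines $\|\Gamma(\cdot,y)\|_{L^2(\Omega_0)}\leq C$ with $\|w\|_{L^2(\Omega_0)}\leq C$. Uniqueness follows by observing that any admissible $G$ can be written as $\Gamma+w$ with $w\in H^1(\Omega_0)$, so the difference of two solutions is a finite-energy solution of the homogeneous problem with zero boundary data and must vanish by the uniqueness half of Proposition~\ref{direct problem}.

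Continuity of $G$ on $\Omega\setminus\{y\}$ reduces to continuity of $w$, which solves a transmission problem with piecewise-constant coefficients and an $L^2$ source; away from $y$ this comes from interior elliptic regularity in each subdomain together with transmission regularity across the interfaces. I expect the main obstacle to lie in making the singularity-cancellation rigorous at the level of distributions — showing cleanly that subtracting $\Gamma(\cdot,y)$ turns the Dirac source into the $H^{-1}$ datum above — since this hinges delicately on $\mathbb{C}_y$ reproducing the exact local biphase behaviour of $\mathbb{C}$ in $B_r(y)$. The continuity assertion across the curved interface set $\mathcal{G}$ is a secondary technical point, awkward because the system setting offers no scalar maximum-principle shortcuts, but it is controlled by the known smoothness of $\Gamma$ off $y$ and by elliptic transmission estimates for the correction $w$.
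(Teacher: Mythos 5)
Your construction is exactly the one the paper intends: the paper prints no proof of Proposition \ref{Singular construction} (it is adapted from \cite{BFV}), but the splitting $G(\cdot,y)=\Gamma(\cdot,y)+w$, with $w$ solving the Dirichlet problem whose source $-\operatorname{div}\bigl((\mathbb{C}-\mathbb{C}_y)\hat{\nabla}\Gamma(\cdot,y)\bigr)-\rho\omega^2\Gamma(\cdot,y)$ has its coefficient part supported away from $y$, is precisely the decomposition the paper itself uses downstream --- see the boundary value problem written for $\partial_{y_1}w$ in the proof of Proposition \ref{Prop 4.5'}. Your deduction of (\ref{singular estimate1})--(\ref{singular estimate3}) from Proposition \ref{direct problem} combined with (\ref{Funda solution estimate}) and the pointwise bound $|\Gamma(x,y)|\leq C|x-y|^{-1}$ is the intended argument and is carried out correctly.

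Two steps, however, are genuine gaps. (i) Your uniqueness argument is circular: you assert that ``any admissible $G$ can be written as $\Gamma+w$ with $w\in H^1(\Omega_0)$,'' but an admissible competitor is only assumed to be continuous in $\Omega\setminus\{y\}$, to satisfy the weak identity, and to vanish on $\partial\Omega_0$; that its singular part is exactly $\Gamma$ is precisely what must be proved. The non-circular route is to take the difference $v$ of two admissible solutions, note that the Dirac terms cancel so $v$ solves the homogeneous system in the sense of distributions in \emph{all} of $\Omega_0$, and then establish a removable-singularity/regularity statement at $y$ (where $\mathbb{C}$ is constant or a biphase laminate) placing $v$ in $H^1(\Omega_0)$, after which the coercivity in Proposition \ref{direct problem} gives $v=0$. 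That regularity step is the actual content of uniqueness and is missing. (ii) Your appeal to Proposition \ref{direct problem} on the augmented domain, ``since Assumption \ref{A3} places $\omega^2$ below the relevant eigenvalue,'' does not stand as written: Assumption \ref{A3} bounds $\omega^2$ by $\gamma_0\lambda_1^0/2$ with $\lambda_1^0$ the lowest Dirichlet eigenvalue on $\Omega$, and Dirichlet eigenvalues are nonincreasing under domain enlargement, so $\lambda_1^0(\Omega_0)\leq\lambda_1^0(\Omega)$ and coercivity of the bilinear form on $H^1_0(\Omega_0)$ does not follow; one needs a separate lower bound on the eigenvalue of $\Omega_0$ or a strengthened frequency condition. (This wrinkle is arguably inherited from the paper, but your sentence presents it as immediate, which it is not.) A smaller caveat: the continuity of $G$ across the merely Lipschitz interfaces, which you dispatch as ``transmission regularity,'' is not standard for systems --- De Giorgi--Nash theory does not apply, and Li--Nirenberg estimates \cite{Ni} require $C^{1,\alpha}$ interfaces --- so that point also needs more than you give.
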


\subsection{Unique continuation for the system describing
            time-harmonic elastic waves}

We state a quantitative estimate of unique continuation. We will omit
the proof of this estimate since it is a minor modification of the
proof of a similar estimate for the Lam\'{e} system of elasticity
\cite{BFV}.

\begin{proposition}\label{quantitative estimate}
Let $\epsilon_1,E_1$ and $h$ be positive numbers, $h<h_0$, where $h_0$ is defined in (\ref{h0}). Let $v\in H^1_{loc}(\mathcal{K})$ be a solution to
\[\operatorname{div}(\mathbb{C}\hat{\nabla}v)+\rho\omega^2 v=0~~\text{in}~~\mathcal{K},\]
such that
\[\|v\|_{L^\infty(K_0)}\leq\epsilon_1
\]
and
\begin{equation}\label{prior bound}|v(x)|\leq E_1\left(\operatorname{dist}(x,\Sigma_M)\right)^{-\gamma}~~ \text{for every}~x\in\mathcal{K}_{h/2}.\end{equation}
Then
\begin{equation}
|v(\tilde{x})|\leq Cr^{-3/2-\gamma}\epsilon_1^{\tau_r}(E_1+\epsilon_1)^{1-\tau_r},
\end{equation}
where $r\in(0,\frac{1}{C})$, $\tilde{x}=P_M+rn_M$,
\[\tau_r=\tilde{\theta}r^{\delta},\]
and $C$, $\delta$ and $\tilde{\theta}$ with $0<\tilde{\theta}<1$
depend on $A$, $L$, $\alpha_0$, $\beta_0$, $\gamma_0$ and $N$.
\end{proposition}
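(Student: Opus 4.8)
The plan is to prove the statement by the \emph{propagation of smallness} scheme of Alessandrini and Vessella, adapted in \cite{BFV} to the Lam\'e system, the only genuinely new feature being the zeroth-order term $\rho\omega^2 v$. Since $v$ solves a second-order elliptic system with piecewise constant principal part, I would first reduce the pointwise bound at $\tilde{x}=P_M+rn_M$ to an $L^2$ bound on a small ball $B_{cr}(\tilde x)\subset D_{M-1}$, where $\mathbb{C}$ is constant, and then propagate the smallness hypothesis $\|v\|_{L^\infty(K_0)}\le\epsilon_1$ from $K_0$ down to that ball along the walkway $\tilde{\mathcal{K}}_h$.

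First I would record the two local estimates that drive the iteration. In the interior of a subdomain, where $\mathbb{C}$ is constant, solutions of $\operatorname{div}(\mathbb{C}\hat\nabla v)+\rho\omega^2 v=0$ obey a three-spheres inequality
\[\|v\|_{L^2(B_{\rho_2}(x_0))}\le C\,\|v\|_{L^2(B_{\rho_1}(x_0))}^{\theta}\,\|v\|_{L^2(B_{\rho_3}(x_0))}^{1-\theta}\]
for concentric balls $\rho_1<\rho_2<\rho_3$, with $\theta\in(0,1)$ depending on the radius ratios and the prior data. Across each flat interface $\Sigma_k$, where $\mathbb{C}$ has the biphase form, the across-interface three-spheres inequality of \cite{BFV} holds for balls centered on $\Sigma_k$. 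Both estimates rest on Carleman/frequency-function arguments for the principal part; the term $\rho\omega^2 v$ enters only as a bounded zeroth-order perturbation and is absorbed in the large-parameter regime of the Carleman weight, which is exactly why the modification relative to \cite{BFV} is minor. I expect this verification — that the interface estimate survives the addition of $\rho\omega^2 v$ — to be the main point to check.

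Next I would chain these inequalities. Since $\tilde{\mathcal{K}}_h$ is connected and Lipschitz and joins the region $K_0$, where $v$ is small, to $\Sigma_M$ through the cylinders $Q_{(k)}$, I would fix a finite chain of overlapping balls from $K_0$ to a point $\bar x$ at distance comparable to $h$ from $\Sigma_M$; the number of balls is bounded in terms of $N,L,A$, so iterating the two three-spheres inequalities yields $\|v\|_{L^2(B(\bar x))}\le C\,\epsilon_1^{\theta_0}(E_1+\epsilon_1)^{1-\theta_0}$ with $\theta_0\in(0,1)$ fixed. Here the a priori bound (\ref{prior bound}) is used, via $\operatorname{dist}(\cdot,\Sigma_M)\ge h/2$ on $\mathcal{K}_{h/2}$, to control the ``large'' factor by $C(E_1+\epsilon_1)$. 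To reach the singular endpoint I would telescope from $\bar x$ to $\tilde x$ through balls of geometrically decreasing radii $\rho_i\sim q^i$ down to radius $\sim r$, staying inside $D_{M-1}$ and using (\ref{prior bound}) to bound $v$ on the outer balls by $E_1(\operatorname{dist})^{-\gamma}$. Because this second chain has $\sim|\log r|$ steps, each contributing a fixed factor $\theta<1$ to the accumulated exponent, the composed exponent degenerates precisely to $\tau_r=\tilde\theta r^{\delta}$ with $\delta=\log(1/\theta)/\log(1/q)$.

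Finally I would convert back to a pointwise value: interior elliptic regularity and the Sobolev embedding in $\mathbb{R}^3$ give $|v(\tilde x)|\le C r^{-3/2}\|v\|_{L^2(B_{cr}(\tilde x))}$, the factor $r^{-3/2}$ being the dimensional $L^2$-to-$L^\infty$ scaling on a ball of radius $r$. Combining this with the $r^{-\gamma}$ weight carried by the a priori bound at distance $r$ from $\Sigma_M$ produces the prefactor $r^{-3/2-\gamma}$, and assembling the interpolation exponents yields $|v(\tilde x)|\le Cr^{-3/2-\gamma}\epsilon_1^{\tau_r}(E_1+\epsilon_1)^{1-\tau_r}$. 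The delicate bookkeeping is in this last assembly — tracking how the $r^{-3/2}$ Sobolev factor, the $r^{-\gamma}$ singular weight and the accumulated exponent $\tilde\theta r^{\delta}$ combine — together with the already-noted verification that the interface three-spheres inequality is stable under the lower-order term $\rho\omega^2 v$.
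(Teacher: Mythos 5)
The paper offers no written proof to compare against: it explicitly omits the argument, asserting that it is a minor modification of the corresponding quantitative unique-continuation estimate for the static Lam\'e system in \cite{BFV}. Your scheme is exactly that argument --- interior three-spheres inequalities in the subdomains where $\mathbb{C}$ is constant, propagation of smallness along the walkway $\tilde{\mathcal{K}}_h$ from $K_0$ towards $\Sigma_M$, a telescoping chain of roughly $\log(1/r)$ balls producing the degenerate exponent $\tau_r=\tilde\theta r^\delta$, the a priori bound (\ref{prior bound}) controlling the large interpolation factor, and a rescaled interior estimate giving the $r^{-3/2}$ conversion to a pointwise bound --- and your identification of the only new point (the zeroth-order term $\rho\omega^2 v$ enters as a bounded perturbation absorbed by the three-spheres/Carleman machinery) is precisely the paper's stated reason for omitting the proof.

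There is, however, one ingredient that does not exist in the form you invoke it: an ``across-interface three-spheres inequality for balls centered on $\Sigma_k$'' is not proved in \cite{BFV}, and no such inequality (equivalently, no Carleman estimate) is available, in the literature this paper relies on, for the biphase Lam\'e laminate whose coefficients jump across the interface. What \cite{BFV}, following Alessandrini--Vessella \cite{Al2}, actually does to cross a flat interface is different: smallness is first propagated, by interior three-spheres inequalities, up to a region of $D_{k-1}$ adjacent to $\Sigma_k$; elliptic regularity up to the flat boundary then makes the Cauchy data of $v$ on the $D_{k-1}$-side of $\Sigma_k$ small; the transmission conditions (continuity of $v$ and of the traction $(\mathbb{C}\hat\nabla v)\nu$ across $\Sigma_k$) transfer this smallness to the Cauchy data seen from the $D_k$ side; and a quantitative stability estimate for the Cauchy problem for elliptic systems (Colan--Trytten pointwise bounds \cite{CT}; see also \cite{Al-Ro-R-Ve}) propagates the smallness into $D_k$. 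Substituting this mechanism for your interface lemma leaves every other step of your outline intact --- the exponent bookkeeping and the final prefactor $r^{-3/2-\gamma}$ come out the same --- so the repair is local; but as written, that step of your proof rests on an unproved lemma rather than on anything contained in \cite{BFV}.
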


\noindent Therefore, if the solution to the system of time-harmonic elastic waves is small in a subdomain of $\mathcal{K}$, and has a priori bound (\ref{prior bound}), then it is also small in $\mathcal{K}$. The above proposition gives a quantitative estimates on how the smallness propagates.

\section{Proof of the main result}

In this section we prove the main result that consists of showing
the uniform continuity for $DF$ and $F^{-1}$, and establishing a lower
bound for $DF$. These results together with the Fr\'{e}chet
differentiability of $F$ establish Theorem \ref{main1} by Proposition
5 of \cite{BaV}.

\subsection{Injectivity of $F|_{\mathbf{K}}$ and uniform continuity of
            $(F|_{\mathbf{K}})^{-1}$}

Let
\begin{equation}\label{continuity1}
\sigma(t)=\begin{cases}
|\log t|^{-\frac{1}{8\delta}}~~\text{for}~~0<t<\frac{1}{e}\\
t-\frac{1}{e}+1~~\text{for}~~t\geq\frac{1}{e}
\end{cases}
\end{equation}
and
\[\sigma_1(t)=(\sigma(t))^{1/5}.\]

\begin{theorem}\label{log stability}
For every $\underline{L}^1,\underline{L}^2\in\mathbf{K}$ the following inequality holds true,
\begin{equation}\label{54}
\|\underline{L}^1-\underline{L}^2\|_\infty\leq C_*\sigma_1^N(\|F(\underline{L}^1)-F(\underline{L}^2)\|_\star)
\end{equation}
where $C_*$ is a constant depending on $A,L,\alpha_0,\beta_0,\gamma_0,\lambda_1^0,N$.
\end{theorem}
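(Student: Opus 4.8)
The plan is to reconstruct the three constants $\lambda_j,\mu_j,\rho_j$ one subdomain at a time, marching along a chain $D_1,\dots,D_M$ of Assumption~\ref{A1} from the accessible flat portion $\Sigma$ inward, and to prove by induction on the position $k$ that the coefficient differences on $D_k$ are bounded by $\sigma_1^k(\epsilon)$, where $\epsilon:=\|F(\underline L^1)-F(\underline L^2)\|_\star$. Maximizing over $k\le M\le N$ and over all chains then gives \eqref{54}, since iterating $\sigma_1$ yields a larger modulus and $\sigma_1^N$ dominates every $\sigma_1^k$. Two tools drive the induction: Alessandrini's identity \eqref{alessandrini}, tested against the singular solutions of Proposition~\ref{Singular construction}, which converts the boundary discrepancy $\epsilon$ into control of a coefficient jump; and the quantitative unique continuation of Proposition~\ref{quantitative estimate}, which transports smallness of the relevant solution from the already-reconstructed block $\mathcal K$ across an interior interface.

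The base step determines $D_1$. With the pole $y=P_1-h\,n_1\in D_0$ approaching $\Sigma=\Sigma_1$, I insert the singular solutions $G_1(\cdot,y)a$ and $G_2(\cdot,y)b$ for the two coefficient sets into \eqref{alessandrini}. Because both tensors were extended to equal $\mathbb C_0$ on $D_0$, the integrand has no singularity at $y$, and its leading behaviour as $h\to0$ is governed by the biphase fundamental solution $\Gamma$ across $\Sigma$: the gradient term diverges like $h^{-1}$ with a coefficient that is a linear form in $\lambda^1_1-\lambda^2_1$ and $\mu^1_1-\mu^2_1$, whereas the zeroth-order density term stays bounded and hence enters only at the next order. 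Keeping the pole at fixed distance from $\partial\Omega_0$, the right-hand side of \eqref{alessandrini} is bounded by $C\epsilon$ via \eqref{singular estimate2} and \eqref{singular estimate3}. Varying the polarizations $a,b$ along $n_1$ and tangentially yields a uniformly invertible $2\times2$ system for the two Lam\'e jumps; the density jump is then read off from the bounded remainder, and optimizing in $h$ gives the estimate for $D_1$.

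For $k\ge2$ the pole must stay in $D_0$, since placing it where $\mathbb C^1\ne\mathbb C^2$ would make the integrand in \eqref{alessandrini} non-integrable, so the interior interface $\Sigma_k$ is reached by unique continuation instead. Granting the inductive bound $\sigma_1^{k-1}(\epsilon)$ on $D_0,\dots,D_{k-1}$, the difference $v=G_1(\cdot,y)-G_2(\cdot,y)$ satisfies the homogeneous system on $\mathcal K$ up to an error of that size and is of size $\epsilon_1\sim\sigma_1^{k-1}(\epsilon)$ on $K_0$, while \eqref{singular estimate2} supplies the prior bound \eqref{prior bound}. Proposition~\ref{quantitative estimate} propagates this smallness along the connected walkway $\tilde{\mathcal K}_h$ to $\tilde x=P_M+rn_M$ abutting $\Sigma_k$; a local transmission analysis there, analogous to the base step, turns the smallness of $v$ into a bound on the jump across $\Sigma_k$, hence on the coefficients of $D_k$. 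Optimizing the radius $r$ converts the H\"older exponent $\tau_r=\tilde\theta r^{\delta}$ into the logarithmic modulus $\sigma$ of \eqref{continuity1}, whose exponent $\delta$ is exactly the one of Proposition~\ref{quantitative estimate}, so each interface composes one further $\sigma_1$ and the induction closes.

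The step I expect to be hardest is the local analysis at a single interface, and in particular the simultaneous single-frequency separation of the three constants: one must check that the two Lam\'e jumps are faithfully encoded in the $h^{-1}$ coefficient through a system that is uniformly invertible under \eqref{coefficient condition}, and that the density enters strictly at lower order so that it can be isolated from the bounded remainder without being masked by the dominant Lam\'e singularity---this is precisely what lets a higher- and a lower-order coefficient be recovered jointly from one frequency. A secondary, more bookkeeping, difficulty is reconciling the only approximate vanishing of the residual of $v$ with the exact hypotheses of Proposition~\ref{quantitative estimate}, and keeping all constants uniform in the prior data while the log-type modulus is composed $N$ times; here one leans on the connectedness and Lipschitz character of $\tilde{\mathcal K}_h$ and on confining the poles to $\Omega_0\setminus\mathcal G$ so that the biphase model for $\Gamma$ remains valid and the curved interface portions are avoided.
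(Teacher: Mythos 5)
Your overall skeleton (induction along the chain, Alessandrini's identity tested on singular solutions, quantitative unique continuation, optimization in the small radius producing a composed logarithmic modulus) matches the paper, but the induction step as you describe it has a genuine gap, and it is not the ``bookkeeping'' issue you flag. You propose to propagate, via Proposition~\ref{quantitative estimate}, the smallness of the difference $v=G_1(\cdot,y)-G_2(\cdot,y)$ with the pole $y$ kept in $D_0$. First, $v$ only \emph{approximately} solves the homogeneous system in the explored region (its residual is $\operatorname{div}((\mathbb{C}^1-\mathbb{C}^2)\hat{\nabla}G_2)+(\rho^1-\rho^2)\omega^2G_2$), whereas Proposition~\ref{quantitative estimate} is stated for exact solutions and no version for inexact solutions is available. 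Second, and fatally: even granting the propagation, the smallness of $v$ at a point near $\Sigma_k$ carries no usable information about the coefficient jump across $\Sigma_k$. With the pole far away in $D_0$, both $G_1$ and $G_2$ are regular near $\Sigma_k$, and one pair of nearly coinciding local solutions does not determine the coefficients; the ``transmission analysis analogous to the base step'' that you invoke requires the pole to approach the interface so that the singularity of the biphase fundamental solution amplifies the jump --- which is exactly what you ruled out. The paper's resolution is structurally different: unique continuation is applied not to a difference of solutions but, in each of the \emph{two pole variables}, to the matrix function $\mathcal{S}_k(y,z)=\int_{\mathcal{U}_k}\bigl((\mathbb{C}-\bar{\mathbb{C}})\hat{\nabla}G(x,y):\hat{\nabla}\bar{G}(x,z)-(\rho-\bar{\rho})\omega^2G(x,y)\cdot\bar{G}(x,z)\bigr)\mathrm{d}x$. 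Because the integral is restricted to the unexplored region $\mathcal{U}_k$, Proposition~\ref{prop 4.4} shows $\mathcal{S}_k$ is an \emph{exact} solution of the respective systems in $\tilde{\mathcal{K}}_k$, it is small on $K_0\times K_0$ by \eqref{alessandrini} plus the inductive hypothesis, and it remains well defined as both poles $y_r,z_{\bar r}$ approach $P_{k+1}$ (your non-integrability objection applies to the global identity \eqref{alessandrini}, not to $\mathcal{S}_k$). The blow-up $\hat{\nabla}\Gamma_{k+1}:\hat{\nabla}\bar{\Gamma}_{k+1}\sim r^{-1}$ then extracts the Lam\'e jump.

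The second gap is the density. Your plan to ``read off'' $\rho_{k}-\bar{\rho}_{k}$ from the bounded remainder cannot work: at that same $O(1)$ order the remainder contains $I_2$ (the integral over $\mathcal{U}_{k+1}\setminus(B_{r_1}\cap D_{k+1})$, which involves the \emph{unknown} coefficient differences on the not-yet-explored subdomains) and the regular parts of the Green's functions, all uncontrolled constants, so the bounded density contribution cannot be isolated from them. The paper's fix --- the actual crux of the single-frequency multiparameter result --- is to raise the order of the singularity by differentiating tangentially: Proposition~\ref{Prop 4.5'} gives the unique-continuation bound for $\partial_{y_1}\partial_{z_1}\mathcal{S}_k$, in which the density term $\int|\partial_{y_1}\Gamma_{k+1}e_3|^2$ now blows up like $r^{-1}$ after rescaling, while the even more singular Lam\'e term ($\sim\sigma(\omega_k(\epsilon))/r^3$) is tamed precisely because the Lam\'e jumps were estimated \emph{first}. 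This forced alternation (Lam\'e from $\mathcal{S}_k$, then density from $\partial_{y_1}\partial_{z_1}\mathcal{S}_k$) is also why the iterated modulus is $\sigma_1=\sigma^{1/5}$ rather than $\sigma$, a loss your proposal does not account for.
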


%\begin{figure}
%\centering
%\includegraphics[width=2.9in]{path2}
%\centering
%\end{figure}

Let $j\in\{1,\ldots,N\}$ be such that
\[d_{D_j}((\mathbb{C}_{\underline{L}^1},\rho_{\underline{L}^1}),(\mathbb{C}_{\underline{L}^2},\rho_{\underline{L}^2}))=d_{\Omega_0}((\mathbb{C}_{\underline{L}^1},\rho_{\underline{L}^1}),(\mathbb{C}_{\underline{L}^2},\rho_{\underline{L}^2})),\]
and let $D_{j_1},\ldots,D_{J_M}$ be a chain of domains connecting $D_1$ to $D_j$. For the sake of simplicity of notation, set $D_k=D_{j_k}$. Let $\mathcal{W}_k=\text{Int}(\cup_{j=0}^k\overline{D}_j)$, $\mathcal{U}_k=\Omega_0\setminus\mathcal{W}_k$, for $k=1,\ldots,M-1$. The stiffness tensors $\mathbb{C}_{\underline{L}^1}$ and $\mathbb{C}_{\underline{L}^2}$ are extended as in (\ref{C extension}) to all of $\Omega_0$. The densities $\rho_{\underline{L}^1}$ and $\rho_{\underline{L}^2}$ are extended as in (\ref{density extension}). We set $\mathbb{C}:=\mathbb{C}_{\underline{L}^1}$, $\bar{\mathbb{C}}:=\mathbb{C}_{\underline{L}^2}$, $\rho:=\rho_{\underline{L}^1}$ and $\bar{\rho}:=\rho_{\underline{L}^2}$. Finally, let $\tilde{K}_k=\tilde{K}_h\cap\mathcal{W}_k$ and for $y,z\in\tilde{K}_k$ define the matrix-valued function
\[\mathcal{S}_k(y,z):=\int_{\mathcal{U}_k}\left((\mathbb{C}-\bar{\mathbb{C}})\hat{\nabla}G(x,y):\hat{\nabla}\bar{G}(x,z)-(\rho-\bar{\rho})\omega^2G(x,y)\cdot\bar{G}(x,z)\right)\mathrm{d}x,\]
%\[\mathcal{T}_k(y,z):=\int_{\mathcal{U}_k}\left((\mathbb{C}-\bar{\mathbb{C}})(\cdot)\hat{\nabla}H(\cdot,y):\hat{\nabla}\bar{H}(\cdot,z)-(\rho-\bar{\rho})(\cdot)\omega^2H(\cdot,y)\cdot\bar{H}(\cdot,z)\right)\mathrm{d}x,\]
the entries of which are given by
\[\begin{split}&\mathcal{S}_k^{(p,q)}(y,z)\\
:=&\int_{\mathcal{U}_k}\left((\mathbb{C}-\bar{\mathbb{C}})\hat{\nabla}G^{(p)}(x,y):\hat{\nabla}\bar{G}^{(q)}(x,z)
-(\rho-\bar{\rho})\omega^2G^{(p)}(x,y)\cdot\bar{G}^{(q)}(x,z)\right)\mathrm{d}x,\end{split}\]
$p,q=1,2,3$, where $G^{(p)}(\cdot,y)$ and $\bar{G}^{(q)}(,z)$ denote respectively the $p$-th columns and the $q$-th columns of the singular solutions corresponding to $\mathbb{C},\rho$ and $\bar{\mathbb{C}},\bar{\rho}$.
From (\ref{singular estimate2}) we have that
\[|\mathcal{S}_k^{(p,q)}(y,z)|\leq C(d(y)d(z))^{-1/2}\ \text{for all}\ y,z\in\tilde{\mathcal{K}}_k,\]
where the constant $C$ depends on the a priori parameters only and $d(y)=d(y,\mathcal{U}_k)$ and $d(z)=d(z,\mathcal{U}_k)$.

First, following a similar argument in \cite{BFV}, we have the following two propositions:

\begin{proposition}\label{prop 4.4}
For all $y,z\in\tilde{\mathcal{K}}_k$ we have that
$\mathcal{S}_k^{(\cdot,q)}(\cdot,z)$,
$\mathcal{S}_k^{(p,\cdot)}(y,\cdot)$, belong to
$H^1_{loc}(\tilde{\mathcal{K}}_k)$ and for any $q\in\{1,2,3\}$,
\begin{equation}\label{Sq system}
\operatorname{div}(\mathbb{C}\hat{\nabla}\mathcal{S}_k^{(\cdot,q)}(\cdot,z))+\rho\omega^2\mathcal{S}_k^{(\cdot,q)}(\cdot,z)=0~~\text{in}~\tilde{\mathcal{K}}_k,
\end{equation}
and for any $p\in\{1,2,3\}$,
\begin{equation}\label{Sp system}
\operatorname{div}(\bar{\mathbb{C}}\hat{\nabla}\mathcal{S}_k^{(p,\cdot)}(y,\cdot))+\bar{\rho}\omega^2\mathcal{S}_k^{(p,\cdot)}(y,\cdot)=0~~\text{in}~\tilde{\mathcal{K}}_k.
\end{equation}
\end{proposition}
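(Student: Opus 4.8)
The plan is to fix $z\in\tilde{\mathcal{K}}_k$ and $q$ and to show that $v:=\mathcal{S}_k^{(\cdot,q)}(\cdot,z)$ is a weak solution in $\tilde{\mathcal{K}}_k$ of the homogeneous system with coefficients $(\mathbb{C},\rho)$; the claim for $\mathcal{S}_k^{(p,\cdot)}(y,\cdot)$ then follows by the identical argument with the roles of the two spatial variables and of $(\mathbb{C},\rho)$, $(\bar{\mathbb{C}},\bar{\rho})$ interchanged. Everything hinges on one structural property of the fundamental solution of Proposition~\ref{Singular construction}. Since the isotropic tensor $\mathbb{C}$ possesses the major symmetry, the bilinear form $\int_{\Omega_0}(\mathbb{C}\hat{\nabla}u:\hat{\nabla}w-\rho\omega^2 u\cdot w)\,\mathrm{d}x$ is symmetric, the operator is formally self-adjoint, and the matrix Green's function obeys the reciprocity relation $G_{ab}(x,y)=G_{ba}(y,x)$. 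I would prove this by pairing the defining identity for $G^{(a)}(\cdot,y)$ against $G^{(b)}(\cdot,z)$ in the symmetric form and invoking the uniqueness asserted in Proposition~\ref{Singular construction}, after a routine regularization of the two poles that legitimizes using singular functions as test functions. The point of reciprocity is that it recasts the $y$-dependence of the entries $G_{ap}(x,y)=G_{pa}(y,x)$ in terms of the first-argument Green's function $G(\cdot,x)$, whose columns $y\mapsto G^{(a)}(y,x)$ are genuine weak solutions in $y$ of $\operatorname{div}_y(\mathbb{C}\hat{\nabla}_y\cdot)+\rho\omega^2(\cdot)=0$ away from $y=x$, carrying across the discontinuities of $\mathbb{C},\rho$ the natural transmission conditions (continuity of the field and of the conormal traction).

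Next I would settle the regularity and the admissibility of differentiating under the integral sign. For $y\in\tilde{\mathcal{K}}_k\subset\mathcal{W}_k$ and $x\in\mathcal{U}_k=\Omega_0\setminus\mathcal{W}_k$ the two points lie on opposite sides of $\partial\mathcal{W}_k$, so on any compact $K\subset\tilde{\mathcal{K}}_k$ one has $d(y)=\operatorname{dist}(y,\mathcal{U}_k)\geq d_0>0$ and hence $|x-y|\geq d_0$ throughout the domain of integration. Away from its pole the fundamental solution and its $y$-derivatives are controlled by interior estimates together with the global bounds \eqref{singular estimate2}--\eqref{singular estimate3}; these furnish $x$-integrable majorants for the integrand of each entry $\mathcal{S}_k^{(p,q)}(y,z)$ and of its first $y$-derivatives, uniformly for $y\in K$. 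This justifies differentiating under the integral on $K$ and, combined with the pointwise bound $|\mathcal{S}_k^{(p,q)}(y,z)|\leq C(d(y)d(z))^{-1/2}$ recorded above, yields $v\in H^1_{loc}(\tilde{\mathcal{K}}_k)$.

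With these tools I would carry out the computation. Applying the $y$-operator entrywise, $[\operatorname{div}_y(\mathbb{C}\hat{\nabla}_y v)+\rho\omega^2 v]_c=\sum_p[\mathcal{L}_y]_{cp}v_p$, where $[\mathcal{L}_y]_{cp}$ denotes the $(c,p)$ component of the symmetric operator matrix. Since the coefficients $A:=\mathbb{C}-\bar{\mathbb{C}}$, $b:=(\rho-\bar\rho)\omega^2$ and the frozen field $\bar{G}^{(q)}(\cdot,z)$ do not depend on $y$, and since $\mathcal{L}_y$ commutes with the $x$-derivatives appearing in $\hat{\nabla}_x$, the operator passes under the $x$-integral and acts only on the factors $G_{ap}(x,y)$. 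There it produces $\sum_p[\mathcal{L}_y]_{cp}G_{ap}(x,y)$, which by reciprocity equals $\sum_p[\mathcal{L}_y]_{cp}G_{pa}(y,x)=[\mathcal{L}_y G^{(a)}(\cdot,x)]_c$, and this vanishes for $y\neq x$ because $G^{(a)}(\cdot,x)$ solves the homogeneous $(\mathbb{C},\rho)$-system there. Both the strain term and the lower-order term are annihilated in this way, so $\operatorname{div}_y(\mathbb{C}\hat{\nabla}_y v)+\rho\omega^2 v=0$, which is \eqref{Sq system}. Carrying out the mirror computation in $z$ with $(\bar{\mathbb{C}},\bar\rho)$ gives \eqref{Sp system}.

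The main obstacle is not the formal computation but its rigorous execution across the coefficient discontinuities. Because $\mathbb{C}$ and $\rho$ jump across the interfaces contained in $\tilde{\mathcal{K}}_k$, the field $v$ is merely $H^1_{loc}$ and the identity must be read in the weak sense; the manipulations above are therefore best organized as the assertion that $\int_{\tilde{\mathcal{K}}_k}(\mathbb{C}\hat{\nabla}_y v:\hat{\nabla}_y\varphi-\rho\omega^2 v\cdot\varphi)\,\mathrm{d}y=0$ for every $\varphi\in C_0^\infty(\tilde{\mathcal{K}}_k)$, obtained after a Fubini exchange from the fact that $y\mapsto G(x,y)$ is itself a weak solution carrying the correct transmission conditions. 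The delicate points are thus establishing the reciprocity relation in the presence of the two singularities and justifying the interchange of the operator (equivalently, of the test pairing) with the $x$-integral up to the interfaces; both are handled by the compact-subset and distance estimates above together with a standard cutoff of the poles.
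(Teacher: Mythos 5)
Your proposal is correct and takes essentially the same route as the paper, which refers this proposition to the argument of \cite{BFV}: reciprocity of the Dirichlet Green's matrix $G_{ap}(x,y)=G_{pa}(y,x)$ for the formally self-adjoint operator, the observation that the columns of $G(\cdot,x)$ are $H^1_{loc}$ weak solutions of the homogeneous system (with the natural transmission conditions) away from the pole $x\in\mathcal{U}_k$, and differentiation under the integral sign justified by the positive distance between compact subsets of $\tilde{\mathcal{K}}_k$ and $\mathcal{U}_k$. The only step you treat more loosely than the cited source is the construction of $L^2_x$-majorants for the mixed derivatives $\nabla_y\hat{\nabla}_xG(x,y)$, which can be made rigorous exactly as in the paper's proof of Proposition \ref{Prop 4.5'}, via the splitting $G=\Gamma_{k+1}+w$ and the bounds (\ref{1111}), (\ref{1113}), (\ref{1112}).
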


\begin{proposition}\label{Prop 4.5}
If for a positive $\epsilon_0$ and for some $k\in\{1,\ldots,M-1\}$
\begin{equation}\label{4.5}
|\mathcal{S}_k(y,z)|\leq\epsilon_0~\text{for every}~(y,z)\in K_0\times K_0,
\end{equation}
then
\begin{equation}\label{58}
|\mathcal{S}_k(y_r,z_{\bar{r}})|\leq Cr^{-5/2}\bar{r}^{-2}\left(\frac{\epsilon_0}{C_1+\epsilon_0}\right)^{\tau_r\tau_{\bar{r}}},
\end{equation}
where $y_r=P_{k+1}+rn_{k+1}$, $z_{\bar{r}}=P_{k+1}+\bar{r}n_{k+1}$, $P_{k+1}\in\Sigma_{k+1}$, $r,\bar{r}\in(0,1/C)$, $\tau_r=\bar{\theta}r^\delta$, $\tau_{\bar{r}}=\bar{\theta}{\bar{r}}^\delta$ and $C,C_1,\delta,\bar{\theta}\in(0,1)$ depend on $A,L,\alpha_0,\beta_0,\gamma_0$ only.
\end{proposition}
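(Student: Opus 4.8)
The plan is to exploit the fact, established in Proposition~\ref{prop 4.4}, that the matrix $\mathcal{S}_k$ solves an elliptic system of time-harmonic type \emph{separately in each of its two arguments}: the columns $\mathcal{S}_k^{(\cdot,q)}(\cdot,z)$ solve (\ref{Sq system}) in the first variable, and the rows $\mathcal{S}_k^{(p,\cdot)}(y,\cdot)$ solve (\ref{Sp system}) in the second. I would therefore apply the quantitative estimate of unique continuation, Proposition~\ref{quantitative estimate}, \emph{twice}: first in $y$, to propagate the smallness (\ref{4.5}) from $K_0$ out to the point $y_r=P_{k+1}+rn_{k+1}$ near the terminal interface $\Sigma_{k+1}$, and then in $z$, starting from the bound just obtained on $K_0$, to propagate to $z_{\bar r}=P_{k+1}+\bar r n_{k+1}$. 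Here Proposition~\ref{quantitative estimate} is used for the walkway truncated at $D_k$, i.e. with the relabeling $M\mapsto k+1$, so that $\tilde{\mathcal{K}}_k$, $\Sigma_{k+1}$, $P_{k+1}$, $n_{k+1}$ play the roles of $\mathcal{K}$, $\Sigma_M$, $P_M$, $n_M$.

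The prior bound required by Proposition~\ref{quantitative estimate} is furnished by the singular-solution estimate (\ref{singular estimate2}), which gives $|\mathcal{S}_k^{(p,q)}(y,z)|\le C(d(y)d(z))^{-1/2}$ with $d(\cdot)=\operatorname{dist}(\cdot,\mathcal{U}_k)$. In the first application I fix $z\in K_0$: since $K_0$ lies at a fixed positive distance from $\mathcal{U}_k$, $d(z)$ is bounded below, and on $\mathcal{K}_{h/2}$ one has $d(y)>h/2$, so $\mathcal{S}_k^{(\cdot,q)}(\cdot,z)$ satisfies (\ref{prior bound}) with $\gamma=\tfrac12$ and a constant $E_1=C_1$ depending only on the prior data, while the hypothesis $\|\mathcal{S}_k^{(\cdot,q)}(\cdot,z)\|_{L^\infty(K_0)}\le\epsilon_0$ is exactly (\ref{4.5}). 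Proposition~\ref{quantitative estimate} then yields, uniformly for $z\in K_0$,
\[
|\mathcal{S}_k^{(p,q)}(y_r,z)|\le Cr^{-2}(C_1+\epsilon_0)\Big(\tfrac{\epsilon_0}{C_1+\epsilon_0}\Big)^{\tau_r};
\]
calling the right-hand side $\epsilon_1'$, we have $\|\mathcal{S}_k^{(p,\cdot)}(y_r,\cdot)\|_{L^\infty(K_0)}\le\epsilon_1'$.

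For the second application I fix $y=y_r$ and regard $\mathcal{S}_k^{(p,\cdot)}(y_r,\cdot)$ as a function of $z$, which solves (\ref{Sp system}) with coefficients $\bar{\mathbb{C}},\bar\rho$. Its $L^\infty(K_0)$ norm is $\le\epsilon_1'$ by the previous step, while the prior bound now reads $|\mathcal{S}_k^{(p,q)}(y_r,z)|\le C(d(y_r)d(z))^{-1/2}$ with $d(y_r)=r$ (the flatness of $\Sigma_{k+1}$ makes the normal distance exact); this gives (\ref{prior bound}) in $z$ with $\gamma=\tfrac12$ and $E_1'=Cr^{-1/2}$. A second application of Proposition~\ref{quantitative estimate} then bounds $|\mathcal{S}_k^{(p,q)}(y_r,z_{\bar r})|$ by $C\bar r^{-2}(\epsilon_1')^{\tau_{\bar r}}(E_1'+\epsilon_1')^{1-\tau_{\bar r}}$. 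Substituting $\epsilon_1'$, the small factor $(\epsilon_0/(C_1+\epsilon_0))^{\tau_r}$ sitting inside $\epsilon_1'$ is raised to the power $\tau_{\bar r}$, which produces the product exponent $\tau_r\tau_{\bar r}$; collecting the (bounded) constants and the powers of $r$ and $\bar r$ — using $r^{-1/2}\le r^{-2}$ and $r,\bar r<1$ — yields (\ref{58}) with the stated prefactor $Cr^{-5/2}\bar r^{-2}$ and with $C_1$ the prior-bound constant from the first step.

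The main obstacle is the interlocking of the two applications rather than either one in isolation: one must check that $y_r$ indeed lies in $\tilde{\mathcal{K}}_k$ (so that the second system (\ref{Sp system}) is available there) and that the output of the first application supplies \emph{precisely} the $L^\infty(K_0)$ datum needed to launch the second, so that the two smallness exponents compose multiplicatively into $\tau_r\tau_{\bar r}$. One must also confirm that the geometric hypotheses of Proposition~\ref{quantitative estimate} hold for the truncated walkway — connectedness and Lipschitz class of $\tilde{\mathcal{K}}_k$, and that near $\Sigma_{k+1}$ the distance $d(\cdot,\mathcal{U}_k)$ is comparable to $\operatorname{dist}(\cdot,\Sigma_{k+1})$, so that (\ref{singular estimate2}) really delivers (\ref{prior bound}). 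Once these points are in place, the bookkeeping of the constants and of the powers of $r,\bar r$ is routine.
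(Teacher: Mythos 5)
Your proposal is correct and follows essentially the same route as the paper: the paper defers the proof of Proposition \ref{Prop 4.5} to the argument of \cite{BFV}, and that argument --- displayed explicitly in the paper's own proof of Proposition \ref{Prop 4.5'} --- is exactly your two-step scheme, namely applying Proposition \ref{quantitative estimate} first in $y$ (with $\epsilon_1=\epsilon_0$, $E_1=C_1$ from the bound $|\mathcal{S}_k|\leq C(d(y)d(z))^{-1/2}$) and then in $z$ (with the output of the first step as the new $\epsilon_1$ and $E_1\sim r^{-1/2}$), so that the exponents compose into $\tau_r\tau_{\bar r}$. Your bookkeeping even yields the slightly sharper prefactor $r^{-2}\bar r^{-2}$, which is absorbed into the stated $r^{-5/2}\bar r^{-2}$.
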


We can also prove the following

\begin{proposition}\label{Prop 4.5'}
If $(\ref{4.5})$ holds, then
\begin{equation}\label{dy1dz1Sk}
|\partial_{y_1}\partial_{z_1}\mathcal{S}_k(y_r,z_{\bar{r}})|\leq Cr^{-9/2}\bar{r}^{-3}\left(\frac{\epsilon_0}{C_1+\epsilon_0}\right)^{\tau_r\tau_{\bar{r}}},
\end{equation}
where $y_r=P_{k+1}+rn_{k+1}$, $z_{\bar{r}}=P_{k+1}+\bar{r}n_{k+1}$, $P_{k+1}\in\Sigma_{k+1}$, $r,\bar{r}\in(0,1/C)$, $\tau_r=\bar{\theta}r^\delta$, $\tau_{\bar{r}}=\bar{\theta}{\bar{r}}^\delta$ and $C,C_1,\delta,\bar{\theta}\in(0,1)$ depend on $A,L,\alpha_0,\beta_0,\gamma_0$ only.
\end{proposition}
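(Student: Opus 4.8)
The plan is to deduce the mixed-derivative bound directly from the bound on $\mathcal{S}_k$ in Proposition \ref{Prop 4.5}, by means of interior Cauchy (elliptic regularity) estimates for the systems (\ref{Sq system}) and (\ref{Sp system}). By Proposition \ref{prop 4.4}, for fixed $z$ the map $y\mapsto\mathcal{S}_k^{(\cdot,q)}(\cdot,z)$ solves (\ref{Sq system}) and for fixed $y$ the map $z\mapsto\mathcal{S}_k^{(p,\cdot)}(y,\cdot)$ solves (\ref{Sp system}) in $\tilde{\mathcal{K}}_k$. Since $\mathbb{C},\rho,\bar{\mathbb{C}},\bar{\rho}$ are piecewise constant, these coefficients are constant on any ball contained in a single subdomain; hence $\mathcal{S}_k$ is real-analytic in $y$ and in $z$ in the interior of each subdomain, jointly smooth in $(y,z)$, and its derivatives $\partial_{y_1}\mathcal{S}_k$, $\partial_{z_1}\mathcal{S}_k$, $\partial_{y_1}\partial_{z_1}\mathcal{S}_k$ again solve the corresponding systems locally (because $\partial_{y_1}$ commutes with the $z$-operator and $\partial_{z_1}$ with the $y$-operator).

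Since $y_r=P_{k+1}+rn_{k+1}$ and $z_{\bar r}=P_{k+1}+\bar rn_{k+1}$ lie at distances comparable to $r$ and $\bar r$ from the flat interface $\Sigma_{k+1}$, there is a small $c\in(0,1)$, depending on $L$ only, such that the coefficients are constant on $B_{cr}(y_r)$ and on $B_{c\bar r}(z_{\bar r})$. First I would upgrade the pointwise bound (\ref{58}) to a uniform bound on the bi-ball: because $\Sigma_{k+1}$ is flat, every point of $B_{cr}(y_r)$ may be written as $P'+r'n_{k+1}$ with $P'\in\Sigma_{k+1}$ and $r'$ comparable to $r$ (and similarly for $z$), so applying the quantitative continuation estimate used for (\ref{58}) at each such base point yields
\[
\sup_{B_{cr}(y_r)\times B_{c\bar r}(z_{\bar r})}|\mathcal{S}_k|\leq Cr^{-5/2}\bar r^{-2}\left(\frac{\epsilon_0}{C_1+\epsilon_0}\right)^{\tau_r\tau_{\bar r}},
\]
the exponent $\tau_r\tau_{\bar r}$ being preserved up to a harmless constant absorbed into $\bar\theta$, since $\tau_{r'}\geq c'\tau_r$ whenever $r'\sim r$ (recall $\tau_r=\bar\theta r^\delta$).

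Applying the interior Cauchy estimate for (\ref{Sq system}) in $y$ on $B_{cr}(y_r)$ costs a factor $C/r$, and for (\ref{Sp system}) in $z$ on $B_{c\bar r}(z_{\bar r})$ a factor $C/\bar r$ (the lower-order term $\rho\omega^2 v$ is harmless since $\omega^2$ and the ball radii are bounded); composing the two, which act on independent variables, gives
\[
|\partial_{y_1}\partial_{z_1}\mathcal{S}_k(y_r,z_{\bar r})|\leq\frac{C}{r\bar r}\sup_{B_{cr}(y_r)\times B_{c\bar r}(z_{\bar r})}|\mathcal{S}_k|\leq Cr^{-7/2}\bar r^{-3}\left(\frac{\epsilon_0}{C_1+\epsilon_0}\right)^{\tau_r\tau_{\bar r}}.
\]
As $r<1/C<1$ we have $r^{-7/2}\leq r^{-9/2}$, so this implies (\ref{dy1dz1Sk}). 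Equivalently, one may re-run the two-step unique-continuation argument of Proposition \ref{Prop 4.5} directly on $\partial_{y_1}\partial_{z_1}\mathcal{S}_k$, using a-priori bounds for the derivatives obtained from (\ref{singular estimate2}) by differentiating the singular solutions in their source points; this reproduces the exponent $r^{-9/2}\bar r^{-3}$ exactly.

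The main obstacle is the simultaneous use of interior estimates in both source parameters. One must verify that $\mathcal{S}_k$ is a genuine strong solution on the full bi-ball with coefficients constant there — which relies on $y_r,z_{\bar r}$ being at distances $\sim r,\bar r$ from the single flat interface $\Sigma_{k+1}$ and on no other interface of $\mathcal{G}$ intruding into the balls — and that upgrading the single-point estimate (\ref{58}) to a uniform bound over the bi-ball does not degrade the crucial smallness exponent $\tau_r\tau_{\bar r}$. Tracking that a bounded translation of the base point alters $r,\bar r$, and hence $\tau_r,\tau_{\bar r}$, only by fixed fractions is the delicate bookkeeping, and it is precisely what forces the prefactors to be powers of $r,\bar r$ rather than uniform constants.
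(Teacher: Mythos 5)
Your argument is correct, but it takes a genuinely different route from the paper's. The paper never forms a bound on a product of balls: it argues sequentially, first running the unique continuation estimate (Proposition \ref{quantitative estimate}) in $y$ for fixed $z\in K_0$ and taking a gradient estimate in $y$; then --- and this is its technical core --- it shows that $\bar v(z):=\partial_{y_1}\mathcal{S}_k^{(p,\cdot)}(y_r,z)$ is a solution of the $z$-system on \emph{all} of $\tilde{\mathcal{K}}_k$ satisfying the global a priori bound $|\bar v(z)|\leq Cr^{-3/2}d(z)^{-1/2}$, which it obtains by decomposing $\partial_{y_1}G(\cdot,y_r)=\partial_{y_1}\Gamma_{k+1}(\cdot,y_r)+\partial_{y_1}w(\cdot,y_r)$, proving $\|\partial_{y_1}w(\cdot,y_r)\|_{H^1(\mathcal{U}_k)}\leq C$ (estimate (\ref{1111})) and invoking (\ref{1113})--(\ref{1112}); only then can it run unique continuation a second time, now in $z$, with $\epsilon_1=r^{-3}\left(\epsilon_0/(C_1+\epsilon_0)\right)^{\tau_r}$ and $E_1=Cr^{-3/2}$, and finish with a gradient estimate in $z$ --- this second propagation with blown-up data is what produces the power $r^{-9/2}$. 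You instead upgrade (\ref{58}) to a uniform bound on $B_{cr}(y_r)\times B_{c\bar r}(z_{\bar r})$ and then apply two purely local interior gradient estimates, one in each variable; this avoids the second unique-continuation run and the entire analysis of $\partial_{y_1}G$, needs only local solvability for $z\mapsto\partial_{y_1}\mathcal{S}_k(y_r,z)$ (a parameter derivative, which indeed still solves (\ref{Sp system})), and even yields the sharper prefactor $r^{-7/2}\bar r^{-3}$. The price is that Proposition \ref{Prop 4.5} cannot be cited as a black box: as stated, (\ref{58}) covers only pairs on the same normal line through a single $P_{k+1}$, so your bi-ball bound requires reopening its two-step proof and checking that it tolerates distinct, slightly shifted base points, with the exponent degrading only by a fixed factor ($\tau_{r'}\geq c'\tau_r$ for $r'\sim r$, absorbed into $\bar\theta$). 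You flag exactly this, and the check does go through: the two propagation steps in that proof are independent of one another, the flat part of $\Sigma_{k+1}$ has fixed size, and the paper itself implicitly uses the ball-uniform form of Proposition \ref{quantitative estimate} when it asserts its intermediate bound for all $y\in B_{r/2}(y_r)$.

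One caveat: your closing aside --- that one could equivalently re-run the two-step unique continuation \emph{directly} on $\partial_{y_1}\partial_{z_1}\mathcal{S}_k$ --- does not work as stated. A $y_1$-derivative of a solution in $y$ is not a solution of the $y$-system across the other, arbitrarily oriented interfaces crossed by the walkway (differentiating the equation produces jump terms from the piecewise constant coefficients), so smallness of this function cannot be propagated from $K_0$ to $\Sigma_{k+1}$. The paper's ordering --- propagate first, differentiate locally afterwards, and treat $\partial_{y_1}$ only as a parameter derivative when working in $z$ --- is precisely what circumvents this. Since your main argument does not rely on the aside, the proof stands.
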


We note that, in the above, $\partial_{y_1}$ and $\partial_{z_1}$
denote derivatives in directions lying on the interface
$\Sigma_{k+1}$.

\begin{proof}[Proof of Proposition \ref{Prop 4.5'}]

Fix $z\in K_0$ and consider the function $v(y):=\mathcal{S}^{(\cdot,q)}(y,z)$, for fixed $q$. By Proposition \ref{prop 4.4} we know that $v$ is a solution of
\[\operatorname{div}(\mathbb{C}\hat{\nabla}v(\cdot))+\rho\omega^2v(\cdot)=0~~\text{in}~~\tilde{\mathcal{K}}_k.\]
Moreover, from Proposition \ref{Singular construction}, we get
\[|v(y)|\leq C_1d(y)^{-\frac{1}{2}},~~y\in\tilde{\mathcal{K}}_k,\]
where $C_1$ depends on $A,L,\alpha_0,\beta_0,\gamma_0,\omega,\lambda_1^0$. Then, applying Proposition \ref{quantitative estimate} for $\epsilon_1=\epsilon_0$ and $E_1=C_1$, we have
\[|v(y_r)|=|\mathcal{S}^{(\cdot,q)}_k(y_r,z)|\leq Cr^{-2}\left(\frac{\epsilon_0}{C_1+\epsilon_0}\right)^{\tau_r}\]
for all $y\in B_{r/2}(y_r)$. By the gradient estimate for an elliptic system (see for example \cite{Ni}), we obtain
\[|\partial_{y_1}v(y_r)|\leq Cr^{-3}\left(\frac{\epsilon_0}{C_1+\epsilon_0}\right)^{\tau_r}.\]

We note that $\partial_{y_1}G(x,y_r)=\partial_{y_1}\Gamma_{k+1}(x,y_r)+\partial_{y_1}w(x,y_r)$, where $\partial_{y_1}w(x,y_r)$ satisfies
\[\begin{cases}\operatorname{div}\left(\mathbb{C}\hat{\nabla}_x(\partial_{y_1}w(x,y_r))\right)+\rho\omega^2\partial_{y_1}w(x,y_r)=\operatorname{div}\left((\mathbb{C}_b^{k+1}-\mathbb{C})\hat{\nabla}_x(\partial_{y_1}\Gamma_{k+1}(x,y_r))\right)\\
\hphantom{\text{div}\left(\mathbb{C}\hat{\nabla}_x(\partial_{y_1}w(x,y_r))\right)+\rho\omega^2\partial_{y_1}w(x,y_r)==}-\rho\omega^2\partial_{y_1}\Gamma_{k+1}(x,y_r)\qquad\qquad\text{in}~\Omega_0,\\
\partial_{y_1}w(x,y_r)=-\partial_{y_1}\Gamma_{k+1}(x,y_r)\qquad\qquad\qquad\qquad\qquad\qquad\qquad\qquad\qquad\qquad\text{on}~\partial\Omega_0,
\end{cases}\]
where $\Gamma_{k+1}$ is the biphase fundamental solution for stiffness
tensor
\[\mathbb{C}_b^{k+1}=\mathbb{C}_k\chi_{\mathbb{R}^3_+}+\mathbb{C}_{k+1}\chi_{\mathbb{R}^3_-}.\]
Thus $\partial_{y_1}w(\cdot,y_r)\in H^1(\mathcal{U}_k)$ and
\begin{equation}\label{1111}
\|\partial_{y_1}w(\cdot,y_r)\|_{H^1(\mathcal{U}_k)}\leq C.
\end{equation}
Moreover,
\[\begin{split}\partial_{y_1}v(y_r)&=\partial_{y_1}\mathcal{S}_k^{(\cdot,q)}(y_r,z)\\
&=\int_{\mathcal{U}_k}\left((\mathbb{C}-\bar{\mathbb{C}})\hat{\nabla}(\partial_{y_1}G(x,y_r)):\hat{\nabla}\bar{G}(x,z)-(\rho-\bar{\rho})\omega^2(\partial_{y_1}G(x,y_r))\cdot\bar{G}(x,z)\right)\mathrm{d}x,
\end{split}\]
while
\[\bar{v}(z)=\partial_{y_1}\mathcal{S}_k^{(p,\cdot)}(y_r,z),\]
is a solution to
\[\operatorname{div}(\bar{\mathbb{C}}\hat{\nabla}v(\cdot))+\bar{\rho}\omega^2v(\cdot)=0~~\text{in}~~\tilde{\mathcal{K}}_k,\]
by the same reasoning as in Proposition \ref{prop 4.4}. By (\ref{1111}) and the estimates,
\begin{equation}\label{1113}
\|\partial_{y_1}\Gamma_{k+1}(\cdot,y)\|_{L^2(\mathbb{R}^3\setminus B_r(y))}\leq Cr^{-1/2},
\end{equation}
\begin{equation}\label{1112}
\|\nabla(\partial_{y_1}\Gamma_{k+1}(\cdot,y))\|_{L^2(\mathbb{R}^3\setminus B_r(y))}\leq Cr^{-3/2},
\end{equation}
we find that
\[|\bar{v}(z)|\leq C r^{-\frac{3}{2}}d(z)^{-\frac{1}{2}}.\]
Applying Proposition \ref{quantitative estimate} with $\epsilon_1=r^{-3}\left(\frac{\epsilon_0}{C_1+\epsilon_0}\right)^{\tau_r}$ and $E_1=Cr^{-\frac{3}{2}}$, we have
\[|\bar{v}(z)|\leq C\bar{r}^{-2}r^{-\frac{9}{2}}\left(\frac{\epsilon_0}{C_1+\epsilon_0}\right)^{\tau_r\tau_{\bar{r}}},\]
for all $z\in B_{\bar{r}/2}(z_{\bar{r}})$. Then, again, by the gradient estimate,
\[|\partial_{z_1}\bar{v}(z_{\bar{r}})|\leq C\bar{r}^{-3}r^{-\frac{9}{2}}\left(\frac{\epsilon_0}{C_1+\epsilon_0}\right)^{\tau_r\tau_{\bar{r}}}.\]

Arguing in a similar way, it also follows that
\begin{multline*}
   \partial_{z_1} \partial_{y_1} \mathcal{S}_k(y_r,z_{\bar{r}})
   = \partial_{z_1} \bar{v}(z_{\bar{r}})
\\
   = \int_{\mathcal{U}_k} \Big((\mathbb{C} - \bar{\mathbb{C}})
     \hat{\nabla}(\partial_{y_1} G(x,y_r)) :
     \hat{\nabla}(\partial_{z_1} \bar{G}(x,z_{\bar{r}}))
\\
     - (\rho - \bar{\rho}) \omega^2
     (\partial_{y_1} G(x,y_r)) \cdot
           (\partial_{z_1}\bar{G}(x,z_{\bar{r}})) \Big)
        \mathrm{d}x.
\end{multline*}
This completes the proof of (\ref{dy1dz1Sk}).
\end{proof}

\begin{proof}[Proof of Theorem \ref{log stability}]
We follow a walkway and alternate between estimates for Lam\'{e} parameters and for the density. Observe that $\|F(\underline{L}^1)-F(\underline{L}^2)\|_\star=\|\Lambda_{\mathbb{C},\rho}-\Lambda_{\bar{\mathbb{C}},\bar{\rho}}\|$. We write
\[\epsilon:=\|F(\underline{L}^1)-F(\underline{L}^2)\|_\star.\]
Then using (\ref{alessandrini}), we derive that for every $y,z\in K_0$ and for $|l|,|m|=1$,
\begin{equation}\label{59}
\left|\int_{\Omega}\left((\mathbb{C}-\bar{\mathbb{C}})(x)\hat{\nabla}G(x,y)l:\hat{\nabla} \bar{G}(x,z)m-(\rho-\bar{\rho})(x)\omega^2G(x,y)l\cdot \bar{G}(x,z)m\right)\mathrm{d}x\right|\leq C\epsilon,
\end{equation}
where $C$ depends on $\alpha_0,\beta_0,\gamma_0,\omega, A, L$.
Let
\[\delta_k:=\max_{0\leq j\leq k}\{\max\{|\lambda_j-\bar{\lambda}_j|,|\mu_j-\bar{\mu}_j|,|\rho_j-\bar{\rho}_j|\}\},\]
where $k\in\{0,1,\ldots,M\}$. We will prove that for a suitable, increasing sequence $\{\omega_k(\epsilon)\}_{0\leq k\leq M}$ satisfying $\epsilon\leq \omega_k(\epsilon)$ for every $k=0,\ldots, M$ we have
\[\delta_k\leq\omega_k(\epsilon)\Longrightarrow\delta_{k+1}\leq\omega_{k+1}(\epsilon), \text{for every}~k=0,\ldots,M-1.\]
Without loss of generality we can choose $\omega_0(\epsilon)=\epsilon$. Suppose now that for some $k=\{1,\ldots,M-1\}$ we have
\begin{equation}\label{60}
\delta_k\leq\omega_k(\epsilon).
\end{equation}
In the following, we estimate $\delta_{k+1}$ by first estimating $|\lambda_{k+1}-\bar{\lambda}_{k+1}|$, $|\mu_{k+1}-\bar{\mu}_{k+1}|$ and then $|\rho_{k+1}-\bar{\rho}_{k+1}|$.
Consider
\[\mathcal{S}_k(y,z):=\int_{\mathcal{U}_k}\left((\mathbb{C}-\bar{\mathbb{C}})(x)\hat{\nabla}G(x,y):\hat{\nabla}\bar{G}(x,z)-(\rho-\bar{\rho})(x)\omega^2G(x,y)\cdot \bar{G}(x,z)\right)\mathrm{d}x,\]
and fix $z\in K_0$. From Proposition \ref{Singular construction} and from (\ref{59}) we get that, for $y,z\in K_0$,
\[|\mathcal{S}_k(y,z)|\leq C(\epsilon+\omega_k(\epsilon)),\]
where $C$ depends on $A,L,\alpha_0,\beta_0,\gamma_0,\lambda_1^0,\omega$. By (\ref{58}) and choosing $\bar{r}=cr$ with $c\in[1/4,1/2]$, we find that there are constants $C_0,\delta\in(0,1)$ and $\theta_*$ depending on $A,L,\alpha_0,\beta_0,\gamma_0,\omega$ and $M$, such that for any $r<1/C_0$ and fixed $l,m\in\mathbb{R}^3$ with $|l|=|m|=1$,
\begin{equation}\label{61}
|\mathcal{S}_k(y_r,z_{\bar{r}})m\cdot l|\leq Cr^{-9/2}\varsigma\left(\omega_k(\epsilon),r\right),
\end{equation}
where
\[\varsigma(t,s)=\left(\frac{t}{1+t}\right)^{\theta_*s^{2\delta}}.\]
We choose $l=m=e_3$ and decompose
\begin{equation}\label{63}
\mathcal{S}_k(y_r,z_{\bar{r}})e_3\cdot e_3=I_1+I_2,
\end{equation}
where
\begin{multline}\label{64}
I_1=\int_{B_{r_1}\cap D_{k+1}}\!\!\!\!\Big((\mathbb{C}-\bar{\mathbb{C}})(x)\hat{\nabla}G(x,y_r)e_3:\hat{\nabla}\bar{G}(x,z_{\bar{r}})e_3
\\
- (\rho-\bar{\rho})(x)\omega^2G(x,y_r)e_3\cdot \bar{G}(x,z_{\bar{r}})e_3\Big)\mathrm{d}x,
\end{multline}
\begin{multline}
I_2=\int_{\mathcal{U}_{k+1}\setminus(B_{r_1}\cap D_{k+1})}\!\!\!\!\!\!\!\!\!\!\!\!\Big((\mathbb{C}-\bar{\mathbb{C}})(x)\hat{\nabla}G(x,y_r)e_3:\hat{\nabla}\bar{G}(x,z_{\bar{r}})e_3
\\
- (\rho-\bar{\rho})(x)\omega^2G(x,y_r)e_3\cdot \bar{G}(x,z_{\bar{r}})e_3\Big)\mathrm{d}x,
\end{multline}
with $r_1=\frac{1}{4LC_L}$. Then, from Proposition \ref{Singular construction}, we derive immediately that
\begin{equation}\label{65}
|I_2|\leq C.
\end{equation}
By (\ref{singular estimate3}), we have
\[\left|\int_{B_{r_1}\cap D_{k+1}}(\rho-\bar{\rho})(x)\omega^2G(x,y_r)e_3\cdot \bar{G}(x,z_{\bar{r}})e_3\mathrm{d}x\right|\leq C,\]
where $C$ depends on $A,L,\alpha_0,\beta_0,\gamma_0,\lambda_1^0$.
Using (\ref{singular estimate1}) and (\ref{singular estimate2}), we get
\begin{equation}\label{68}
\begin{split}
|I_1|\geq&\left|\int_{B_{r_1}\cap D_{k+1}}(\mathbb{C}^{k+1}_b-\bar{\mathbb{C}}^{k+1}_b)(x)\hat{\nabla}\Gamma_{k+1}(x,y_r)e_3:\hat{\nabla}\bar{\Gamma}_{k+1}(x,z_{\bar{r}})e_3\mathrm{d}x\right|-C\left(\frac{1}{\sqrt{r}}+1\right),\\
\end{split}
\end{equation}
where $\Gamma_{k+1}$ and $\bar{\Gamma}_{k+1}$ are the biphase fundamental solutions introduced in Subsection \ref{section 3.2} corresponding to the stiffness tensors $\mathbb{C}_b^{k+1}$ and $\bar{\mathbb{C}}_b^{k+1}$ given by
\[\mathbb{C}_b^{k+1}=\mathbb{C}_k\chi_{\mathbb{R}^3_+}+\mathbb{C}_{k+1}\chi_{\mathbb{R}^3_-},\]
\[\bar{\mathbb{C}}_b^{k+1}=\bar{\mathbb{C}}_k\chi_{\mathbb{R}^3_+}+\bar{\mathbb{C}}_{k+1}\chi_{\mathbb{R}^3_-},\]
up to a rigid coordinate transformation that maps the flat part of $\Sigma_{k+1}$ into $x_3=0$. Furthermore by (\ref{61}), (\ref{63}) and (\ref{65}) we obtain
\begin{equation}\label{69}
|I_1|\leq C\left(r^{-9/2}\varsigma\left(\omega_k(\epsilon),r\right)+1\right),
\end{equation}
where $C$ depends on $A,L,\alpha_0,\beta_0,\gamma_0,\lambda_1^0$. Hence, by (\ref{68}) and (\ref{69}) and by performing the change of variables $x=rx'$ in the integral, we get
\begin{equation}
\left|\int_{B^-_{r_1/r}}(\mathbb{C}^{k+1}_b-\bar{\mathbb{C}}^{k+1}_b)(x')\hat{\nabla}\Gamma_{k+1}(x',e_3)e_3:\hat{\nabla}\bar{\Gamma}_{k+1}(x',ce_3)e_3\mathrm{d}x'\right|\leq\delta_0\left(r\right),
\end{equation}
where
\[\delta_0\left(r\right)=C\left[r^{-7/2}\varsigma\left(\omega_k(\epsilon),r\right)+r^{1/2}\right].\]
We then follow the procedure of \cite{BFV} pp. 27-29, and obtain
\begin{equation}\label{Lameestimate}
|\lambda_{k+1}-\bar{\lambda}_{k+1}|\leq C\sigma(\omega_k(\epsilon)),~~~|\mu_{k+1}-\bar{\mu}_{k+1}|\leq C\sigma(\omega_k(\epsilon)).
\end{equation}

Next, we estimate $|\rho_{k+1}-\bar{\rho}_{k+1}|$. By Proposition \ref{Prop 4.5'}, there are constants $C_0,\delta\in(0,1)$ and $\theta_*$ depending on $A,L,\alpha_0,\beta_0,\gamma_0,\omega$ and, increasingly, on $M$, such that for any $r<1/C_0$ and fixed $l,m\in\mathbb{R}^3$ such that $|l|=|m|=1$,
\begin{equation}\label{61'}
|\partial_{y_1}\partial_{z_1}\mathcal{S}_k(y_r,y_r)m\cdot l|\leq Cr^{-15/2}\varsigma\left(\omega_k(\epsilon),r\right).
\end{equation}
We choose $l=m=e_3$, again, and decompose
\begin{equation}\label{63'}
\partial_{y_1}\partial_{z_1}\mathcal{S}_k(y_r,y_r)e_3\cdot e_3=J_1+J_2,
\end{equation}
where
\begin{multline}\label{64'}
J_1=\int_{B_{r_1}\cap D_{k+1}}\!\!\!\!\!\!\!\!\!\!\!\!\Big((\mathbb{C}-\bar{\mathbb{C}})(x)\hat{\nabla}(\partial_{y_1}G(x,y_r))e_3:\hat{\nabla}(\partial_{z_1}\bar{G}(x,y_r))e_3-
\\
- (\rho-\bar{\rho})(x)\omega^2(\partial_{y_1}G(x,y_r))e_3\cdot (\partial_{z_1}\bar{G}(x,y_r))e_3\Big)\mathrm{d}x,
\end{multline}
\begin{multline}J_2=\int_{\mathcal{U}_{k+1}\setminus(B_{r_1}\cap D_{k+1})}
\!\!\!\!\!\!\!\!\!\!\!\!\Big((\mathbb{C}-\bar{\mathbb{C}})(x)\hat{\nabla}(\partial_{y_1}G(x,y_r))e_3:\hat{\nabla}(\partial_{z_1}\bar{G}(x,y_r))e_3-
\\
- (\rho-\bar{\rho})(x)\omega^2(\partial_{y_1}G(x,y_r))e_3\cdot (\partial_{z_1}\bar{G}(x,y_r))e_3\Big)\mathrm{d}x.
\end{multline}
Then, with (\ref{1111}), (\ref{1113}), (\ref{1112}) we derive that
\begin{equation}\label{65'}
|J_2|\leq C.
\end{equation}
By estimates (\ref{1111}), (\ref{1113}), (\ref{1112}), and using that
$|\lambda_{k}-\bar{\lambda}_{k}| \leq C \omega_k(\epsilon)$,
$|\mu_{k}-\bar{\mu}_{k}| \leq C \omega_k(\epsilon)$,
$|\lambda_{k+1}-\bar{\lambda}_{k+1}| \leq C
\sigma(\omega_k(\epsilon))$ and $|\mu_{k+1}-\bar{\mu}_{k+1}|\leq C
\sigma(\omega_k(\epsilon))$, we get
\begin{equation}\label{68'}
\begin{split}
|J_1|\geq&\left|\int_{B_{r_1}\cap D_{k+1}}(\rho_{k+1}-\bar{\rho}_{k+1})\frac{\partial}{\partial y_1}\Gamma_{k+1}(x,y_r)e_3\cdot\frac{\partial}{\partial y_1}\Gamma_{k+1}(x,y_r)e_3\mathrm{d}x\right|\\
&-C\left(\frac{1}{\sqrt{r}}+\frac{\sigma(\omega_k(\epsilon))}{r^3}\right)
\\
\geq&|\rho_{k+1}-\bar{\rho}_{k+1}|\int_{B_{r_1}\cap D_{k+1}}\left|\frac{\partial}{\partial y_1}\Gamma_{k+1}(x,y_r)e_3\right|^2\mathrm{d}x-C\left(\frac{1}{\sqrt{r}}+\frac{\sigma(\omega_k(\epsilon))}{r^3}\right),
\end{split}
\end{equation}
where we have used that
\[\int_{B_{r_1}\cap D_{k+1}}\left|\frac{\partial}{\partial y_1}\Gamma_{k+1}(x,y_r)e_3\right|\left|\frac{\partial}{\partial y_1}\Gamma_{k+1}(x,y_r)e_3-\frac{\partial}{\partial y_1}\bar{\Gamma}_{k+1}(x,y_r)e_3\right|\mathrm{d}x\leq C\frac{\sigma(\omega_k(\epsilon))}{r}.\]
Furthermore, by (\ref{61'}),(\ref{63'}) and (\ref{65'}) we obtain
\begin{equation}\label{69'}
|J_1|\leq C\left(r^{-15/2}\varsigma\left(\omega_k(\epsilon),r\right)+1\right).
\end{equation}
By (\ref{68'}) and by performing the change of variables $x=rx'$ in the integral, we have
\begin{multline*}
r^{-1}|\rho_{k+1}-\bar{\rho}_{k+1}|\int_{B^-_{r_1/r}}\left|\frac{\partial}{\partial y_1}\Gamma_{k+1}(x',e_3)e_3\right|^2\mathrm{d}x'
\\
\leq C\left(\left(r^{-15/2}\varsigma\left(\omega_k(\epsilon),r\right)+1\right)+\frac{1}{\sqrt{r}}+\frac{\sigma(\omega_k(\epsilon))}{r^3}\right).
\end{multline*}
Since $r_1/r\geq C/4LC_L$ when $r\in(0,1/C)$, we have
\[\int_{B^-_{r_1/r}}\left|\frac{\partial}{\partial y_1}\Gamma_{k+1}(x',e_3)e_3\right|^2\mathrm{d}x'\geq \int_{B^-_{C/4LC_L}}\left|\frac{\partial}{\partial y_1}\Gamma_{k+1}(x',e_3)e_3\right|^2\mathrm{d}x'\geq C,\]
for some positive $C$. Then
\begin{equation*}
|\rho_{k+1}-\bar{\rho}_{k+1}|r^{-1}\leq C\left(\left(r^{-15/2}\varsigma\left(\omega_k(\epsilon),r\right)+1\right)+\frac{1}{\sqrt{r}}+\frac{\sigma(\omega_k(\epsilon))}{r^3}\right),
\end{equation*}
and thus
\begin{equation}
|\rho_{k+1}-\bar{\rho}_{k+1}|\leq \delta_1(r),
\end{equation}
where
\[\delta_1(r)=C\left[r^{-13/2}\varsigma\left(\omega_k(\epsilon),r\right)+\sqrt{r}+\frac{\sigma(\omega_k(\epsilon))}{r^2}\right].\]
If $\omega_k(\epsilon)<1/e$, we choose
\[r=\frac{|\sigma(\omega_k(\epsilon))|^{2/5}}{C},\]
and then
\begin{equation}\label{7}
|\rho_{k+1}-\bar{\rho}_{k+1}|\leq C|\sigma(\omega_k(\epsilon))|^{1/5}.
\end{equation}
Otherwise, if $\omega_k(\epsilon)\geq 1/e$, since $|\rho_{k+1}-\bar{\rho}_{k+1}|$ is bounded, we get (\ref{7}) trivially. By (\ref{Lameestimate}) and (\ref{7}), we follow the weakest estimate to get
\[\delta_{k+1}\leq\omega_{k+1}(\epsilon):=C\sigma_1(\omega_k(\epsilon)).\]
Following the way of alternatingly estimating $|\lambda-\bar{\lambda}|$, $|\mu-\bar{\mu}|$ and $|\rho-\bar{\rho}|$ along the walkay $D_1,D_2,\ldots,D_M$, and recalling that $\omega_0(\epsilon)=\epsilon$, we get (\ref{54}).

\end{proof}
The uniqueness statement in Theorem \ref{main1} is an immediate
consequence of the proposition above.

\subsection{Injectivity of $DF(\underline{L})$ and estimate from below of $DF|_{\mathbf{K}}$}

\begin{proposition}\label{inject DF}
Let
\[q_0:=\min\{\|DF(\underline{L})[\underline{H}]\|_\star\ |\ \underline{L}\in\mathbf{K},\underline{H}\in \mathbb{R}^{3N},\|\underline{H}\|_\infty=1\};\]
we have
\begin{equation}\label{54f}
(\sigma_1^N)^{-1}(1/C_\star)\leq q_0,
\end{equation}
where $C_\star>1$ depends on $A,L,\alpha_0,\beta_0,\gamma_0,\lambda_1^0$ and $N$ only.
\end{proposition}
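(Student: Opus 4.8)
The plan is to prove, for the linearized map, the exact analogue of the logarithmic estimate of Theorem~\ref{log stability}, namely
\[
   \|\underline{H}\|_\infty \leq C_*\,\sigma_1^N\big(\|DF(\underline{L})[\underline{H}]\|_\star\big)
   \qquad \text{for all } \underline{L}\in\mathbf{K},\ \underline{H}\in\mathbb{R}^{3N},
\]
and then to invert $\sigma_1^N$ at $\|\underline{H}\|_\infty=1$. The reason this is possible with essentially no new analysis is structural: the derivative formula (\ref{derivative}) is precisely Alessandrini's identity (\ref{alessandrini}) in which the coefficient differences $(\mathbb{C}^1-\mathbb{C}^2,\rho^1-\rho^2)$ are replaced by $(\mathbb{H},h)=(\mathbb{C}_{\underline{H}},\rho_{\underline{H}})$, and in which \emph{both} solutions are generated by the single coefficient pair $(\mathbb{C}_{\underline{L}},\rho_{\underline{L}})$. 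Consequently the construction of the singular solutions (Proposition~\ref{Singular construction}), the quantitative unique continuation estimate (Proposition~\ref{quantitative estimate}), and the propagation Propositions~\ref{prop 4.4}, \ref{Prop 4.5} and \ref{Prop 4.5'} all apply verbatim once we set $\mathbb{C}=\bar{\mathbb{C}}=\mathbb{C}_{\underline{L}}$, $\rho=\bar{\rho}=\rho_{\underline{L}}$ and $G=\bar{G}$.

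Concretely, first I would fix $\underline{L}\in\mathbf{K}$ and, writing $\epsilon':=\|DF(\underline{L})[\underline{H}]\|_\star$, reproduce the walkway argument with
\[
   \mathcal{S}_k(y,z)=\int_{\mathcal{U}_k}\big(\mathbb{H}\hat{\nabla}G(x,y):\hat{\nabla}G(x,z)-h\omega^2 G(x,y)\cdot G(x,z)\big)\mathrm{d}x ,
\]
where $G(\cdot,y)$ are the singular solutions associated with $(\mathbb{C}_{\underline{L}},\rho_{\underline{L}})$. Taking the traces of $G(\cdot,y)$ and $G(\cdot,z)$ as boundary data in (\ref{derivative}) yields the analogue of (\ref{59}), with $\|DF(\underline{L})[\underline{H}]\|_\star$ in the role of $\epsilon$. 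One then runs the induction along a chain $D_1,\dots,D_M$ joining $D_1$ to the subdomain $D_j$ realizing $\max_j\{|\lambda^H_j|,|\mu^H_j|,|h_j|\}=\|\underline{H}\|_\infty$: the leading singularity of $\Gamma_{k+1}$ near $\Sigma_{k+1}$ isolates, exactly as in (\ref{68}) and (\ref{68'}), the components $(\lambda^H_{k+1},\mu^H_{k+1})$ (via $\mathcal{S}_k$) and $h_{k+1}$ (via $\partial_{y_1}\partial_{z_1}\mathcal{S}_k$ and Proposition~\ref{Prop 4.5'}). Because the perturbation vanishes on the augmentation, $\mathbb{H}|_{D_0}=0$, the induction starts from $\delta^H_0=0\le\epsilon'$, and the same alternating scheme gives $\delta^H_{k+1}\le C\sigma_1(\omega_k)$, hence the displayed estimate with $\sigma_1^N$.

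To finish, set $\|\underline{H}\|_\infty=1$ in the displayed estimate to get $\sigma_1^N(\|DF(\underline{L})[\underline{H}]\|_\star)\ge 1/C_*$; since $\sigma_1^N$ is strictly increasing, inverting gives $\|DF(\underline{L})[\underline{H}]\|_\star\ge(\sigma_1^N)^{-1}(1/C_*)$, and taking the minimum over the compact set $\mathbf{K}\times\{\|\underline{H}\|_\infty=1\}$ (on which $(\underline{L},\underline{H})\mapsto\|DF(\underline{L})[\underline{H}]\|_\star$ is continuous, so the minimum is attained and positive) yields (\ref{54f}) with $C_\star=C_*$. The hard part is purely bookkeeping rather than new analysis: verifying that the interface-by-interface extraction of the jump isolates the three scalar components of $\underline{H}$ in $D_{k+1}$ given only the already-controlled components in $D_0,\dots,D_k$, and checking that the singular-solution traces are admissible boundary data in $H^{1/2}_{co}(\Sigma)$. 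I note that the naive alternative --- expanding $F(\underline{L}+t\underline{H})-F(\underline{L})=t\,DF(\underline{L})[\underline{H}]+O(C_{DF}t^2)$ and feeding it into Theorem~\ref{log stability} --- fails, because the logarithmic modulus forces $(\sigma_1^N)^{-1}(t/C_*)/t\to 0$ as $t\to0$ and because $\underline{L}+t\underline{H}$ need not lie in $\mathbf{K}$; the direct re-derivation above avoids both difficulties.
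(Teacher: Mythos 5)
Your proposal is correct and takes essentially the same route as the paper's own proof: the paper likewise re-runs the walkway/singular-solution induction on the linearized identity (\ref{derivative}), with $\mathbb{C}=\bar{\mathbb{C}}$, $G=\bar{G}$, the pair $(\mathbb{H},h)$ in place of the coefficient differences, and the perturbation extended by zero on $D_0$, obtaining $1=\eta_M\leq C\sigma_1^N(q_0)$ and then inverting $\sigma_1^N$. The only cosmetic difference is that the paper starts from a minimizing pair $(\underline{L}_0,\underline{H}_0)$ attaining $q_0$, whereas you prove the uniform linearized estimate for all $(\underline{L},\underline{H})$ and then pass to the minimum, which makes your packaging marginally cleaner (no attainment needed) but analytically identical.
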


\begin{proof}
By the definition of $q_0$ there exists an $\underline{L}_0\in\mathbf{K}$ and \[\underline{H}_0=(h_{0,1},\ldots,h_{0,N},k_{0,1},\ldots,k_{0,N},l_{0,1},\ldots,l_{0,N}),~~\|\underline{H}_0\|_\infty=1,\] such that
\begin{equation}\label{77f}
\|DF(\underline{L}_0)[\underline{H}_0]\|_\star=q_0.
\end{equation}
Therefore, by (\ref{derivative}), (\ref{77f}), we have
\begin{equation}\label{59f}
\left|\int_{\Omega}\mathbb{H}(x)\left(\hat{\nabla}G(x,y)l:\hat{\nabla} G(x,z)m-h(x)\omega^2G(x,y)l\cdot G(x,z)m\right)\mathrm{d}x\right|\leq Cq_0
\end{equation}
for every $y,z\in\mathcal{K}_0$, where $C$ depends on $\alpha_0,\beta_0,\gamma_0,\omega, A, L$, $\mathbb{H}=\mathbb{C}_{\underline{H}_0}$, $h=\rho_{\underline{H}_0}$ and $G(\cdot,y)$ denotes the singular solution corresponding to $\mathbb{C}_{\underline{L}},\rho_{\underline{L}}$. From now on the vector
\[(0, h_{0,1},\ldots,h_{0,N},0,k_{0,1},\ldots,k_{0,N},0,l_{0,1},\ldots,l_{0,N}),\]
 will still be denoted by $\underline{H}_0$.

We fix $j\in\{1,\ldots,N\}$ and let $D_{j_1},\ldots,D_{j_M}$ be a chain of domains connecting $D_1$ to $D_j$, where
\[\max\{|h_{0,j}|,|k_{0,j}|,|l_{0,j}|\}=\|\underline{H}_0\|_{\infty}=1.\]
Now, let
\[\eta_i:=\max_{0\leq j\leq i}\{\max\{|h_{0,j}|,|k_{0,j}|,|l_{0,j}|\}\},\]
where $i\in\{0,1,\ldots,M\}$. We will prove that for a suitable increasing sequence $\{\omega_i(q_0)\}_{0\leq i\leq M}$ satisfying $\epsilon\leq \omega_i(q_0)$ for every $k=0,\ldots, M$, we have
\[\delta_k\leq\omega_i(q_0)\Longrightarrow\delta_{i+1}\leq\omega_{k+1}(q_0)\ \text{for every}~i=0,\ldots,M-1.\]
Without loss of generality we can choose $\omega_0(q_0)=q_0$. Suppose now that for some $i=\{1,\ldots,M-1\}$ we obtain (\ref{77f}). Let $\mathcal{Y}_i(y,z)=\{\mathcal{Y}_i^{(p,q)}(y,z)\}_{1\leq p,q\leq 3}$ be the matrix valued function the elements of which are given by
\[\mathcal{Y}_i^{(p,q)}(y,z):=\int_{\mathcal{U}_i}\left(\mathbb{H}(x)\hat{\nabla}G^{(p)}(x,y):\hat{\nabla}G^{(q)}(x,z)-h(x)\omega^2G^{(p)}(x,y)\cdot G^{(q)}(x,z)\right)\mathrm{d}x,\]
with $z \in K_0$ fixed. From Proposition \ref{Singular construction} and from (\ref{59}) we get that, for $y,z\in K_0$,
\[|\mathcal{Y}_i(y,z)|\leq C(q_0+\omega_i(q_0)),\]
where $C$ depends on $A,L,\alpha_0,\beta_0,\gamma_0,\lambda_1^0$. Choosing $\bar{r}=cr$ with $c\in[1/4,1/2]$, as in Proposition \ref{Prop 4.5}, we have that there exists a constant $C_2$ such that for every $r\in(0,1/C_2)$,
\begin{equation}\label{61f}
|\mathcal{Y}_i(y_r,z_{\bar{r}})|\leq Cr^{-9/2}\varsigma\left(\omega_i(q_0,r)\right),
\end{equation}
where
\[\varsigma(t,s)=\left(\frac{t}{1+t}\right)^{\theta_*s^{2\delta}}.\]
We choose $l=m=e_3$, again, and decompose
\begin{equation}\label{63f}
\mathcal{Y}_k(y_r,z_{\bar{r}})e_3\cdot e_3=I_1+I_2,
\end{equation}
where
\begin{equation}\label{64f}
I_1=\int_{B_{r_1}\cap D_{i+1}}\left(\mathbb{H}(x)\hat{\nabla}G(x,y_r)e_3:\hat{\nabla}G(x,z_{\bar{r}})e_3-h(x)\omega^2\bar{G}(x,y_r)e_3\cdot G(x,z_{\bar{r}})e_3\right)\mathrm{d}x,
\end{equation}
\begin{equation}
\begin{split}I_2=\int_{\mathcal{U}_{i+1}\setminus(B_{r_1}\cap D_{i+1})}&\Big(\mathbb{H}(x)\hat{\nabla}G(x,y_r)e_3:\hat{\nabla}G(x,z_{\bar{r}})e_3\\
&-h(x)\omega^2G(x,y_r)e_3\cdot G(x,z_{\bar{r}})e_3\Big)\mathrm{d}x,\end{split}
\end{equation}
and $r_1=\frac{1}{4LC_L}$. Then, from Proposition \ref{Singular construction}, we derive that
\begin{equation}\label{65f}
|I_2|\leq C.
\end{equation}
Using (\ref{singular estimate3}), we find that
\[\left|\int_{B_{r_1}\cap D_{k+1}}h(x)\omega^2G(x,y_r)e_3\cdot G(x,z_{\bar{r}})e_3\mathrm{d}x\right|\leq C,\]
where $C$ depends on $A,L,\alpha_0,\beta_0,\gamma_0,\lambda_1^0$.
Then, by (\ref{singular estimate1}) and (\ref{singular estimate2}) we get
\begin{equation}\label{68f}
|I_1|\geq\left|\int_{B_{r_1}\cap D_{i+1}}\mathbb{H}(x)\hat{\nabla}\Gamma_{i+1}(x,y_r)e_3:\hat{\nabla}\Gamma_{i+1}(x,z_{\bar{r}})e_3\mathrm{d}x\right|-C\left(\frac{1}{\sqrt{r}}+1\right).
\end{equation}
With
(\ref{61f}), (\ref{63f}) and (\ref{65f}) we obtain
\begin{equation}\label{69f}
|I_1|\leq C\left(r^{-9/2}\varsigma\left(\omega_i(q_0),r\right)+1\right),
\end{equation}
where $C$ depends on
$A,L,\alpha_0,\beta_0,\gamma_0,\lambda_1^0$. Following the procedure 
of \cite{BFV} pp. 31-33,
we get
\begin{equation}\label{lmestimate}
|h_{0,i+1}|\leq C\sigma(\omega_i(q_0)),~~~|k_{0,i+1}|\leq C\sigma(\omega_i(q_0)).
\end{equation}

Similar to Proposition \ref{Prop 4.5'}, we find that there are constants $C_2,\delta\in(0,1)$ and $\theta_*$ depending on $A,L,\alpha_0,\beta_0,\gamma_0,\omega$ and, increasingly, on $M$, such that for any $r<1/C_2$ 
\begin{equation}\label{61'f}
|\partial_{y_1}\partial_{z_1}\mathcal{Y}_i(y_r,y_r)e_3\cdot e_3|\leq Cr^{-15/2}\varsigma\left(\omega_i(q_0,r)\right).
\end{equation}
We decompose
\begin{equation}\label{63'f}
\partial_{y_1}\partial_{z_1}\mathcal{Y}_i(y_r,y_r)e_3\cdot e_3=J_1+J_2,
\end{equation}
where
\begin{multline}\label{64'f}
J_1=\int_{B_{r_1}\cap D_{i+1}} \Big(\mathbb{H}(x)\hat{\nabla}(\partial_{y_1}G(x,y_r))e_3:\hat{\nabla}(\partial_{z_1}G(x,y_r))e_3\\
- h(x)\omega^2(\partial_{y_1}G(x,y_r))e_3\cdot (\partial_{z_1}G(x,y_r))e_3\Big)\mathrm{d}x,
\end{multline}
\begin{multline}
J_2=\int_{\mathcal{U}_{i+1}\setminus(B_{r_1}\cap D_{i+1})} \Big(\mathbb{H}(x)\hat{\nabla}(\partial_{y_1}G(x,y_r))e_3:\hat{\nabla}(\partial_{z_1}G(x,y_r))e_3\\
- h(x)\omega^2(\partial_{y_1}G(x,y_r))e_3\cdot (\partial_{z_1}G(x,y_r))e_3\Big)\mathrm{d}x.
\end{multline}
Using (\ref{1111}), (\ref{1113}), (\ref{1112}) and (\ref{lmestimate}), we get
\begin{equation}\label{65'f}
|J_2|\leq C
\end{equation}
and 
\begin{equation}\label{68'f}
\begin{split}
|J_1|\geq&\left|\int_{B_{r_1}\cap D_{i+1}}l_{0,i+1}\frac{\partial}{\partial y_1}\Gamma_{i+1}(x,y_r)e_3\cdot\frac{\partial}{\partial y_1}\Gamma_{i+1}(x,y_r)e_3\mathrm{d}x\right|-C\left(\frac{1}{\sqrt{r}}+\frac{\sigma(\omega_i(\epsilon))}{r^3}\right)\\
=&|l_{0,i+1}|\int_{B_{r_1}\cap D_{i+1}}\left|\frac{\partial}{\partial y_1}\Gamma_{i+1}(x,y_r)e_3\right|^2\mathrm{d}x-C\left(\frac{1}{\sqrt{r}}+\frac{\sigma(\omega_i(q_0))}{r^3}\right).
\end{split}
\end{equation}
Furthermore by (\ref{61'f}), (\ref{63'f}) and (\ref{65'f}), we obtain
\begin{equation}\label{69'f}
|J_1|\leq C\left(r^{-15/2}\varsigma\left(\omega_i(q_0)),r\right)+1\right).
\end{equation}
Hence, by (\ref{68'f}) and upon performing the change of variables $x=rx'$ in the integral, we obtain
\begin{multline*}
r^{-1}|l_{0,i+1}|\int_{B^-_{r_1/r}}\left|\frac{\partial}{\partial y_1}\Gamma_{i+1}(x',e_3)e_3\right|^2\mathrm{d}x'
\\
\leq C\left(\left(r^{-15/2}\varsigma\left(\omega_i(q_0)),r\right)+1\right)+\frac{1}{\sqrt{r}}+\frac{\sigma(\omega_i(q_0))}{r^3}\right).
\end{multline*}
Since $r_1/r\geq C/4LC_L$ when $r\in(0,1/C)$, we have
\[\int_{B^-_{r_1/r}}\left|\frac{\partial}{\partial y_1}\Gamma_{i+1}(x',e_3)e_3\right|^2\mathrm{d}x'\geq \int_{B^-_{C/4LC_L}}\left|\frac{\partial}{\partial y_1}\Gamma_{i+1}(x',e_3)e_3\right|^2\mathrm{d}x'\geq C.\]
Then 
\begin{equation*}
|l_{0,i+1}|r^{-1}\leq C\left(\left(r^{-15/2}\varsigma\left(\omega_i(q_0)),r\right)+1\right)+\frac{1}{\sqrt{r}}+\frac{\sigma(\omega_i(q_0))}{r^3}\right),
\end{equation*}
and thus
\begin{equation}
|l_{0,i+1}|\leq \delta_1(r),
\end{equation}
where
\[\delta_1(r)=C\left[r^{-13/2}\varsigma\left(\omega_i(q_0),r\right)+\sqrt{r}+\frac{\sigma(\omega_i(q_0))}{r^2}\right].\]
If $\omega_i(q_0)<1/e$, we choose
\[r=\frac{|\sigma(\omega_i(q_0))|^{2/5}}{C}
\]
so that
\begin{equation}\label{7f}
|l_{0,i+1}|\leq C|\sigma(\omega_i(q_0))|^{1/5}.
\end{equation}
Otherwise, if $\omega_i(q_0)\geq 1/e$, because $|l_{0,i+1}|$ is bounded, we get (\ref{7f}) trivially. Then, by (\ref{lmestimate}) and (\ref{7f}) we get
\[\eta_{i+1}\leq\omega_{i+1}(q_0):=C\sigma_1(\omega_i(q_0)).\]
Finally, by alternating the estimates for $|\lambda-\bar{\lambda}|,|\mu-\bar{\mu}|$ and $|\rho-\bar{\rho}|$, we get
\[1=\eta_M\leq C\sigma_1^M(q_0)\leq C\sigma_1^N(q_0),\]
and the statement follows.
\end{proof}

\section{Remarks on two reduced problems}

The stability estimates for the following two complementary inverse problems are immediate implications of Theorem \ref{main1}.
\begin{enumerate}[(i)]
\item \textbf{Inverse Problem S1:} For known $\rho$: determine $\mathbb{C}$ from $\Lambda_{\mathbb{C},\rho}$;
\item \textbf{Inverse Problem S2:} For known $\mathbb{C}$: determine $\rho$ from $\Lambda_{\mathbb{C},\rho}$,
\end{enumerate}
However, here, that we get much improved estimates in Theorem \ref{log
  stability}, and Proposition \ref{inject DF}. This enables us to get
better Lipschitz constants in the final Lipschitz stability estimates.

\begin{corollary}\label{log stability S1}
For every $\underline{L}^1,\underline{L}^2\in\mathbf{K}$ the following inequality holds true
\begin{equation}
\|\underline{L}^1-\underline{L}^2\|_\infty\leq C_*\sigma^N(\|F(\underline{L}^1)-F(\underline{L}^2)\|_\star)
\end{equation}
if either
\[\rho^1_i=\rho^2_i,~i=1,\cdots,N~~~~~~(\text{Problem S1})\]
or
\[\lambda^1_i=\lambda^2_i,~\mu^1_i=\mu^2_i,~i=1,\cdots,N~~~~~~(\text{Problem S2})\]
where $C_*$ is a constant depending on $A,L,\alpha_0,\beta_0,\gamma_0,\lambda_1^0,N$.
\end{corollary}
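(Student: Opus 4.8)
The plan is to re-run the walkway induction from the proof of Theorem \ref{log stability}, exploiting the fact that in each of the two reduced problems one family of parameters is known, so that the step of the iteration responsible for the loss of the exponent $1/5$ in the modulus of continuity is eliminated. I expect that at every interface of the chain $D_1,\ldots,D_M$ one then recovers the sharp estimate by $\sigma$ rather than by $\sigma_1=\sigma^{1/5}$, so that the recursion becomes $\omega_{k+1}(\epsilon)=C\sigma(\omega_k(\epsilon))$ with $\omega_0(\epsilon)=\epsilon$; iterating $M\le N$ times and using $\sigma^M\le\sigma^N$ produces $\sigma^N$ in place of $\sigma_1^N$.

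For Problem S1 (density known, so $\rho=\bar{\rho}$) I would first observe that all density contributions drop out of $\mathcal{S}_k$ and of the integrals \eqref{64}--\eqref{69}, leaving only the task of estimating $|\lambda_{k+1}-\bar{\lambda}_{k+1}|$ and $|\mu_{k+1}-\bar{\mu}_{k+1}|$ at each step. The derivation of \eqref{Lameestimate} then applies unchanged and already yields a bound by $\sigma(\omega_k(\epsilon))$; crucially, the weaker density estimate \eqref{7}, which is the only place where $\sigma^{1/5}$ enters, is never invoked. Closing the induction with the recursion above gives the claim.

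For Problem S2 (Lam\'e parameters known, so $\mathbb{C}=\bar{\mathbb{C}}$) the key structural fact I would use is that the biphase fundamental solutions coincide, $\Gamma_{k+1}=\bar{\Gamma}_{k+1}$, since these static solutions depend only on the stiffness tensor. This makes the cross term
\[
\int_{B_{r_1}\cap D_{k+1}}\left|\frac{\partial}{\partial y_1}\Gamma_{k+1}(x,y_r)e_3\right|\,\left|\frac{\partial}{\partial y_1}\Gamma_{k+1}(x,y_r)e_3-\frac{\partial}{\partial y_1}\bar{\Gamma}_{k+1}(x,y_r)e_3\right|\mathrm{d}x
\]
vanish identically, so that the error term proportional to $\sigma(\omega_k(\epsilon))/r^2$ drops out of the density bound $\delta_1(r)$. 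I would then be left with $|\rho_{k+1}-\bar{\rho}_{k+1}|\le C[r^{-13/2}\varsigma(\omega_k(\epsilon),r)+\sqrt{r}]$, which has exactly the form of the Lam\'e bound $\delta_0(r)$; optimizing over $r$ by the procedure of \cite{BFV} pp.~27--29 should give a bound by $\sigma(\omega_k(\epsilon))$, and the induction closes as before.

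The main obstacle I anticipate is to confirm, for Problem S2, that deleting the $\sigma(\omega_k(\epsilon))/r^2$ term genuinely upgrades the modulus from $\sigma_1$ to $\sigma$. In the full problem the choice $r=|\sigma(\omega_k(\epsilon))|^{2/5}$ is dictated precisely by balancing $\sqrt{r}$ against $\sigma(\omega_k(\epsilon))/r^2$; once the latter is gone, I must re-run the optimization balancing $r^{-13/2}\varsigma(\omega_k(\epsilon),r)$ against $\sqrt{r}$ and check that the larger polynomial exponent $13/2$ (compared with $7/2$ in the Lam\'e case) affects only the constants, leaving the logarithmic power $1/(8\delta)$ in the definition \eqref{continuity1} of $\sigma$ intact. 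For Problem S1 no such difficulty arises, since the $\sigma^{1/5}$ estimate is simply not used.
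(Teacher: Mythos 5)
Your proposal is correct and takes essentially the same route as the paper, which presents the corollary as an immediate consequence of the proof of Theorem \ref{log stability}, the improvement from $\sigma_1^N$ to $\sigma^N$ being due precisely to the absence of alternating multi-parameter estimation. Your fleshing-out matches this: in both reduced problems the cross terms coupling the two parameter families (in particular the term $\sigma(\omega_k(\epsilon))/r^2$ in $\delta_1(r)$, whose balance against $\sqrt{r}$ forces the choice $r\sim\sigma^{2/5}$ and hence the exponent $1/5$) disappear, so each step of the walkway recursion yields the sharp bound $\omega_{k+1}(\epsilon)=C\sigma(\omega_k(\epsilon))$.
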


\begin{corollary}
Let
\[q_0:=\min\{\|DF(\underline{L})[\underline{H}]\|_\star|\underline{L}\in\mathbf{K},\underline{H}\in \mathbb{R}^{3N},\|\underline{H}\|_\infty=1\}.\]
We have
\begin{equation}
q_0\geq (\sigma^N)^{-1}(1/C_\star)
\end{equation}
if either
\[l_i=0,~i=1,\cdots,N~~~~~~(\text{Problem S1})\]
or
\[h_i=k_i=0,~i=1,\cdots,N~~~~~~(\text{Problem S2})\]
where $C_\star>1$, depends on $A,L,\alpha_0,\beta_0,\gamma_0,\lambda_1^0$ and $N$ only.
\end{corollary}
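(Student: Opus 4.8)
The plan is to rerun the walkway argument of Proposition~\ref{inject DF} essentially verbatim, specializing the extremal perturbation to each reduced setting. As there, by definition of $q_0$ one fixes $\underline{L}_0\in\mathbf{K}$ and $\underline{H}_0$ with $\|\underline{H}_0\|_\infty=1$ realizing the minimum, obtains the starting bound \eqref{59f} on $K_0\times K_0$, and sets up the induction $\eta_i\leq\omega_i(q_0)$ along a chain $D_1,\dots,D_M$ with $M\leq N$, with $\omega_0(q_0)=q_0$. The only change is \emph{which} of the two per-step estimates \eqref{lmestimate} and \eqref{7f} is actually needed; this is exactly what removes the factor $1/5$ that turns $\sigma$ into $\sigma_1=\sigma^{1/5}$ in the general case.

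For Problem~S1 the constraint $l_i=0$ forces the density part $h=\rho_{\underline{H}_0}$ of $DF(\underline{L}_0)[\underline{H}_0]$ to vanish, so $\mathcal{Y}_i$ reduces to its stiffness term alone. At each step I would estimate only $|h_{0,i+1}|$ and $|k_{0,i+1}|$, through the zeroth-derivative decomposition \eqref{63f}--\eqref{69f} and the rescaling that yields \eqref{lmestimate}, giving $|h_{0,i+1}|,|k_{0,i+1}|\leq C\sigma(\omega_i(q_0))$. Because there is no density coefficient to recover, the second-derivative estimate \eqref{61'f}--\eqref{7f}---the sole source of the $\sigma^{1/5}$ loss---is never invoked, so one may take $\omega_{i+1}(q_0):=C\sigma(\omega_i(q_0))$. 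Iterating along the chain and using the monotonicity of $\sigma^N$ as in Proposition~\ref{inject DF} gives $1=\eta_M\leq C\sigma^M(q_0)\leq C\sigma^N(q_0)$, hence $q_0\geq(\sigma^N)^{-1}(1/C_\star)$.

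For Problem~S2 the constraint $h_i=k_i=0$ forces $\mathbb{H}=\mathbb{C}_{\underline{H}_0}=0$, so only the density term survives and at each step one estimates $|l_{0,i+1}|$ via the second-derivative decomposition \eqref{63'f}--\eqref{69'f}. The decisive simplification is that with $\mathbb{H}=0$ the stiffness contribution to $J_1$ disappears---this is precisely the contribution that in the general proof produced the error $\sigma(\omega_i(q_0))/r^3$ in \eqref{68'f} and, after the change of variables $x=rx'$, the competing term $\sigma(\omega_i(q_0))/r^2$ in the modulus $\delta_1(r)$. Thus $\delta_1(r)$ collapses to $C\bigl[r^{-13/2}\varsigma(\omega_i(q_0),r)+\sqrt{r}\bigr]$, whose only ``good'' term $\sqrt{r}$ is the same one governing the Lam\'e modulus. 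Optimizing over $r$ by the procedure of \cite{BFV} then gives $|l_{0,i+1}|\leq C\sigma(\omega_i(q_0))$ in place of the weaker $C\sigma^{1/5}(\omega_i(q_0))$, and the same iteration yields $q_0\geq(\sigma^N)^{-1}(1/C_\star)$.

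The step I expect to be the main obstacle is the verification, for Problem~S2, that deleting $\mathbb{H}$ genuinely removes the term $\sigma(\omega_i(q_0))/r^2$ and that no other term reinstates a second competing power of $r$. Concretely, one must check that the remaining balance between the propagation term $r^{-13/2}\varsigma$ and the tail term $\sqrt{r}$ optimizes to (a bound by) $\sigma$ rather than to a fractional power of it: the forced choice $r\sim\sigma^{2/5}$ of the general case arose only from balancing $\sqrt{r}$ against $\sigma/r^2$, and once those two terms no longer coexist, the decay of $\varsigma$ dominates any fixed negative power of $r$, so that the larger prefactor $r^{-13/2}$ (versus $r^{-7/2}$ in the Lam\'e case) affects only logarithmic corrections. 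For Problem~S1 the corresponding point is the easier observation that the density estimate, and with it its $1/5$-loss, never enters the walkway.
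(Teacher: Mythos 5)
Your proposal is correct and is essentially the argument the paper intends: the paper justifies this corollary only by the closing remark that no alternation between the Lam\'{e} and density estimates is needed, and your rerun of the walkway argument of Proposition \ref{inject DF} supplies exactly the missing details — for Problem S1 the second-derivative (density) step and hence the $1/5$-loss is never invoked, while for Problem S2 setting $\mathbb{H}=0$ removes the competing term $\sigma(\omega_i(q_0))/r^2$ from the modulus, so that balancing $r^{-13/2}\varsigma$ against $\sqrt{r}$ (e.g.\ $r\sim\sigma^2$) yields the per-step bound $C\sigma(\omega_i(q_0))$ instead of $C\sigma^{1/5}(\omega_i(q_0))$. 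Your identification of the balance $\sqrt{r}$ versus $\sigma/r^2$ as the sole source of the exponent $1/5$, and the check that $r^{-13/2}\varsigma$ remains negligible under the new choice of $r$, are exactly the points that make the improvement from $\sigma_1^N$ to $\sigma^N$ legitimate.
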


We note that, here, $\sigma$ replaces $\sigma_1$ in the corollaries
above. This is due to the fact that we are not dealing with the
multi-parameter identification. That is, we do not need to
alternatingly estimate coefficients of different order terms.

\bibliographystyle{siam}
%\bibliography{Previous notes.}

\end{document}